\documentclass[final,3p]{elsarticle}
\usepackage{color}
\usepackage{amssymb}
\usepackage{amsmath}
\usepackage{booktabs}
\usepackage{multirow}
\usepackage{tabularx}
\usepackage{tabu}
\usepackage{stmaryrd}
\usepackage{epstopdf}
\usepackage{subfigure} 
\usepackage{graphicx}
\usepackage{caption}
\usepackage{amsthm} 
\usepackage{amssymb} 
\usepackage{mathrsfs}

\newcommand{\mbf}{\mathbf}

\newcommand{\diag}{\text{diag}}

\numberwithin{equation}{section}
\newtheorem{thm}{Theorem}[section]

\newtheorem{rem}[thm]{Remark}
\newtheorem{ex}[thm]{Example}
\allowdisplaybreaks[4]

\biboptions{}
 \journal{*}
\begin{document}
	
\begin{frontmatter}
\title{Novel semi-explicit symplectic schemes for nonseparable stochastic Hamiltonian systems}

\author[ad1,ad2]{Jialin Hong}
\ead{hjl@lsec.cc.ac.cn} 

\author[ad3]{Baohui Hou\corref{cof1}}
\ead{houbaohui@shu.edu.cn} 

\author[ad4]{Liying Sun}
\ead{liyingsun@lsec.cc.ac.cn} 

\address[ad1]{Institute of Computational Mathematics and Scientific/Engineering Computing, Academy of Mathematics and Systems Science, Chinese Academy of Sciences, Beijing 100190, China}
\address[ad2]{School of Mathematical Sciences, University of Chinese Academy of Sciences, Beijing 100049, China}
\address[ad3]{Department of Mathematics, Shanghai University, Shanghai 200444, China}
\cortext[cof1]{Corresponding author}
\address[ad4]{Academy for Multidisciplinary Studies, Capital Normal University, Beijing 100048, China}

\begin{abstract}
In this manuscript, we propose efficient stochastic semi-explicit symplectic schemes tailored for nonseparable stochastic Hamiltonian systems (SHSs). These semi-explicit symplectic schemes are constructed by introducing  augmented Hamiltonians and using symmetric projection. In the case of the artificial restraint in augmented Hamiltonians being zero, the proposed schemes also preserve quadratic invariants, making them suitable for developing semi-explicit charge-preserved multi-symplectic
schemes for stochastic cubic Schr\"odinger equations with multiplicative noise. Through numerical experiments that validate theoretical results, we demonstrate that the proposed stochastic semi-explicit symplectic scheme, which features a straightforward Newton iteration solver, outperforms the traditional stochastic midpoint scheme in terms of effectiveness and accuracy.
\end{abstract}	

\begin{keyword}
	nonseparable stochastic Hamiltonian system \sep stochastic semi-explicit  symplectic scheme  \sep preservation of quadratic invariant \sep augmented Hamiltonian \sep symmetric projection 
\end{keyword}

\end{frontmatter}

\section{Introduction} 
The SHS is a fundamental mathematical model used to describe the dynamics of physical systems subject to both deterministic and random influences. It extends the deterministic  Hamiltonian formalism by incorporating stochastic terms that account for random fluctuations or noise in the system. SHSs are commonly used in physics, chemistry, and other fields to model complex systems where both deterministic and stochastic forces play a role in the system’s behavior. By considering the interplay between deterministic dynamics and random fluctuations, SHSs provide a powerful tool for understanding and analyzing the behavior of a wide range of systems in the presence of uncertainty  (see \cite{CHJS2021,DADG2023,MR1201269,GT1987} and references therein).  
In this manuscript, we consider the SHS  
\begin{equation}
\label{eq;SHS}
\left\{\begin{aligned}
&d X=  \frac{\partial H_0(X, Y)}{\partial Y} d t + \sum\limits_{r=1}^m\frac{\partial H_r(X, Y)}{\partial Y} \circ d W_r,\quad X(0)=x^0, \\
&d Y= -\frac{\partial H_0(X, Y)}{\partial X} d t - \sum\limits_{r=1}^m\frac{\partial H_r(X, Y)}{\partial X} \circ d W_r,\quad Y(0)=y^0,
\end{aligned}\right.
\end{equation}
where $t \in(0, T], x^0, y^0$ are $d$-dimensional column vectors, $H_{r} \in \mathbf{C}^{\infty}\left(\mathbb{R}^{2 d}, \mathbb{R}\right)$ are nonseparable Hamiltonians, and $W_r(\cdot)$ are independent standard Wiener processes for $r=0,1,\ldots,m.$ 
Under appropriate conditions, the SHS \eqref{eq;SHS} admits a unique exact solution. 
The Hamiltonian $H_r,$ $r\in\{0,1,\ldots,m\}$ and \eqref{eq;SHS}  are said to be separable if $H_r$ can be written as $H_r(q, p)=\mathcal{K}_r(p)+\mathcal{L}_r(q)$ with some functions $\mathcal{K}_r$ and $\mathcal{L}_r$, and nonseparable otherwise. 
The stochastic symplectic structure of the stochastic flow $\phi_{t}:(x^0, y^0) \mapsto(X(t), Y(t))$ of \eqref{eq;SHS} can be characterized as
\begin{align*}
\mathrm{d} X(t) \wedge \mathrm{d} Y(t)=\mathrm{d} x^0 \wedge \mathrm{d} y^0, \quad a. s.
\end{align*}
for any $t \in[0, T]$, where $`{\rm d}'$ denotes the differential with respect to the initial value. 
This geometric feature ensures that phase space volume is almost surely conserved along trajectories. 

When numerically simulating a SHS, it is essential to ensure that the numerical scheme preserves the stochastic symplectic geometric structure. 
Stochastic symplectic schemes maintain the accuracy and 
demonstrate good energy behavior in long-time simulations, compared with nonsymplectic ones.  Recently, much attention has been paid to both the construction and analysis of stochastic symplectic schemes for SHSs (see \cite{Anton1,Anton2,CHJ2023LNM,HT2016p1,HS2002book,HSW2017,HW2019LNM, MRT2002p1,MRT2002p2,WWC2022} and references therein. 
For instance, stochastic symplectic schemes were initially introduced in \cite{MRT2002p1,MRT2002p2}. A methodology for constructing stochastic symplectic schemes using the stochastic generating function theory, which involves approximately solving a corresponding stochastic Hamilton--Jacobi partial differential equation, was suggested in \cite{Anton1,Anton2,HS2002book,HSW2017} and references therein. 
The authors in \cite{HT2018} present a general framework for constructing stochastic Galerkin variational integrators, and prove the symplecticity of such integrators for stochastic Hamiltonian systems.
Stochastic symplectic schemes created through composition methods were proposed and studied in \cite{Misawa2010}. 
The separability of  Hamiltonians plays a significant role in the development of symplectic schemes for the SHS.  
In some cases, the separability of  Hamiltonians can transform implicit stochastic symplectic schemes into explicit ones, and it can also simplify the application of the splitting strategy. 
The reason is that the stochastic flows associated with separable Hamiltonians are exactly solvable, making the integration process more manageable. 
However, constructing efficient stochastic symplectic schemes for nonseparable SHSs presents a notable challenge, and would be very useful in numerical simulations. 
Although explicit stochastic symplectic schemes can be derived for certain categories of Hamiltonian systems, these schemes are tailored to specific forms of Hamiltonians and can not be generalized generally to arbitrary nonseparable cases in a simple manner.  

To address this issue, we introduce augmented separable Hamiltonians and establish a 4$d$-dimensional SHS that provides two copies of the original system. 
This SHS can be decomposed into three subsystems that are solvable explicitly while retaining stochastic symplecticity. 
Based on the splitting strategy and the exact solutions of subsystems, we present stochastic symplectic schemes conserving symplectic structure in the extended phase space, rather than the original phase space. 
Despite the explicit nature of these stochastic symplectic schemes, the numerical solution does not remain within the original phase space copies. 
As a consequence, we combine with the symmetric projection and then propose stochastic semi-explicit symplectic schemes for a broader range of nonseparable SHSs. 
In particular, the proposed stochastic symplectic schemes with specific extended phase space symplectic integrators are able to preserve quadratic invariants of \eqref{eq;SHS}. 
Numerical results reveal that the proposed stochastic semi-explicit symplectic scheme is more efficient than the traditional stochastic midpoint scheme. 
This enhanced efficiency is attributed to the simplified nature of the Newton iteration employed in solving the stochastic semi-explicit symplectic scheme. 
Furthermore, by applying a stochastic semi-explicit symplectic scheme in temporal direction and finite difference in spatial direction to discretizing stochastic cubic Schr\"odinger equations with multiplicative noise, we obtain a novel stochastic multi-symplectic scheme, which also preserves the charge conservation law.

The manuscript is organized as follows. In Section 2, we introduce stochastic semi-explicit symplectic schemes for nonseparable SHSs via the extended phase space idea and symmetric projection.
In Section 3, we present semi-explicit fully-discrete schemes preserving both the stochastic multi-symplectic structure and charge conservation law for stochastic cubic Schr\"odinger equations with multiplicative noise.

\section{Stochastic semi-explicit symplectic schemes for SHS}
Motivated by the extended phase space idea from \cite{jayawardana, TaoPRE2016},  we attempt to achieve explicit symplectic schemes inheriting the stochastic symplecticity of the stochastic flow for SHS \eqref{eq;SHS}. 
To this end, we consider augmented Hamiltonians
\begin{align*}
\tilde{H}_i(X,U,Y,V):=H_{i}(X,V)+H_{i}(U,Y)+\gamma_i  \|X-U\|^2+\gamma_i\|Y-V\|^2
\end{align*}
where $i=0,1,\ldots,m,$ $\gamma_i  \|X-U\|^2+\gamma_i\|Y-V\|^2$ is an artificial restraint with $\gamma_i$ being a constant that controls the binding of the two copies. 
By exploiting augmented Hamiltonians and $dW_0(t):=dt,$ we obtain a novel extended SHS
\begin{equation}
\label{eq;ExtendSHS}
\left\{\begin{aligned}
&d X=  \sum\limits_{r=0}^m\bigg (\frac{\partial H_r(U, Y)}{\partial Y} +2\gamma_r(Y-V)\bigg)\circ d W_r,\\
&d U= \sum\limits_{r=0}^m\bigg (\frac{\partial H_r(X, V)}{\partial V}-2\gamma_r(Y-V) \bigg)\circ d W_r, \\
&d Y= -\sum\limits_{r=0}^m\bigg (  \frac{\partial H_r(X, V)}{\partial X}+2\gamma_r(X-U)\bigg)\circ d W_r,\\
&d V= -\sum\limits_{r=0}^m\bigg ( \frac{\partial H_r(U, Y)}{\partial U} -2\gamma_r(X-U)\bigg)\circ d W_r.
\end{aligned}\right.
\end{equation}
with the symplectic form $\widetilde{\omega} = \mathrm{d} X(t) \wedge \mathrm{d} Y(t) + \mathrm{d} U(t) \wedge \mathrm{d} V(t).$    
If we suppose the initial condition $(X(0), U(0),Y(0),V(0))=(x^0,x^0,y^0,y^0)$, 
then the solution of \eqref{eq;ExtendSHS} satisfies $(X(t), Y(t)) = (U(t), V(t))$ for any $t\in [0,T]$ and $(X(t),Y(t))$ coincides with the solution to \eqref{eq;SHS}. 
In other words, the system \eqref{eq;ExtendSHS} gives two copies of the original system \eqref{eq;SHS}. 
Moreover, given a linear invariant of SHS \eqref{eq;SHS} as follows
$$\mathbb I_1(Z):=a^\top Z,\quad a=\left(a_x^\top, a_y^\top\right)^\top \in \mathbb{R}^{2d},\quad a_x, a_y\in \mathbb{R}^d$$ with $Z=(X^\top,Y^\top)^\top,$ 
it can be verified that 
$$\hat{L}_{1}(\zeta):=\hat a^\top\zeta,\quad \zeta= (X^\top,U^\top,Y^\top,V^\top)^\top,\quad \hat{a}=\frac{1}{2}\left(a_x, a_x, a_y, a_y\right) \in \mathbb{R}^{4 d}$$ is a linear invariant of the extended Hamiltonian system \eqref{eq;ExtendSHS} since
\begin{align}
\label{conditon1}
a^\top J_{2d}\nabla H_r(X,V)+a^\top J_{2d}\nabla H_r(U,Y)=0,\quad r=0,1,\ldots,m
\end{align}
with $J_{2d}$ being  standard symplectic matrix. 

The salient feature of \eqref{eq;ExtendSHS} is that $X$ and $Y$ of the Hamiltonian of \eqref{eq;ExtendSHS} is  separable.  
Respectively, denote
by $\mathcal{F}_{\Delta t}^1$, $\mathcal{F}_{\Delta t}^2$ ,$\mathcal{F}_{\Delta t}^3$ the stochastic flow of $H_r(X,V), H_{r}(U,Y),$ $\gamma_r \|X-U\|^2+\gamma_r\|Y-V\|^2$ with $r=0,1,\ldots, m.$  
Exact expressions of these stochastic flows can be explicitly obtained as
\begin{align}
&\mathcal{F}_{\Delta t}^1:
\begin{pmatrix}
   X^n\\
   U^n\\
   Y^n\\
   V^n 
\end{pmatrix}
\rightarrow
\begin{pmatrix}
    X^n\\
    U^n + \sum\limits_{r=0}^m\frac{\partial H_r}{\partial V}(X^n, V^n)\Delta W_r^n\\
    Y^n -\sum\limits_{r=0}^m  \frac{\partial H_r}{\partial X}(X^n, V^n)\Delta W_r^n\\
    V^n
\end{pmatrix},\\
&\mathcal{F}_{\Delta t}^2:
\begin{pmatrix}
   X^n\\
   U^n\\
   Y^n\\
   V^n 
\end{pmatrix}
\rightarrow
\begin{pmatrix}
X^n + \sum\limits_{r=0}^m\frac{\partial H_r}{\partial Y}(U^n, Y^n)\Delta W_r^n\\
U^n\\
Y^n\\
V^n - \sum\limits_{r=0}^m\frac{\partial H_r}{\partial U}(U^n, Y^n)\Delta W_r^n
\end{pmatrix},\\
&\mathcal{F}_{\Delta t}^3:
\begin{pmatrix}
   X^n\\
   U^n\\
   Y^n\\
   V^n 
\end{pmatrix}
\rightarrow
\frac 12\begin{pmatrix}
X^n+U^n+\cos(\Theta)(X^n-U^n)+\sin(\Theta)(Y^n-V^n)\\
X^n+U^n-\cos(\Theta)(X^n-U^n)-\sin(\Theta)(Y^n-V^n)\\
Y^n+V^n-\sin(\Theta)(X^n-U^n)+\cos(\Theta)(Y^n-V^n)\\
Y^n+V^n+\sin(\Theta)(X^n-U^n)-\cos(\Theta)(Y^n-V^n)
\end{pmatrix},
\end{align}
where $\Delta t$ is the temporal step size and $\Theta=4\sum\limits_{r=0}^m\gamma_r\Delta W_r^n$ with $\Delta W_r^n=W_r(t_n+\Delta t)-W_r(t_n),$ $r=0,1,\ldots,m,$ and $t_n=n\Delta t$ for $n\in\{0,1,\ldots,N\}$ and $n=\frac T{\Delta t}.$  
As a result, explicit phase space integrators  for \eqref{eq;ExtendSHS} can be constructed based on the splitting strategy,  such as
\begin{equation}
\label{split1}
\mathcal{F}_{\Delta t}: =\mathcal{F}_{\Delta t }^1 \star \mathcal{F}_{\Delta t }^2\star \mathcal{F}_{\Delta t }^3,
\end{equation}
and the Strang splitting
\begin{equation}
\label{strang}
\mathcal{F}_{\Delta t}: =\mathcal{F}_{\Delta t / 2}^1 \star \mathcal{F}_{\Delta t / 2}^2\star
\mathcal{F}_{\Delta t }^3
\star\mathcal{F}_{\Delta t / 2}^2\star
\mathcal{F}_{\Delta t / 2}^1.
\end{equation}
Since the composition of symplectic transformations is also symplectic, 
the explicit phase space integrator $\widetilde \xi^{n+1}=\mathcal{F}_{\Delta t}(\widetilde{\xi}^{n})$ with $\widetilde{\xi}^{n}= ((\widetilde{X}^{n})^\top, (\widetilde{U}^{n})^\top, (\widetilde{Y}^{n})^\top, (\widetilde{V}^{n})^\top)^\top$ 
preserves the symplectic structure almost surely, i.e.,
$$\mathrm{d}\widetilde{X}^{n+1} \wedge \mathrm{d} \widetilde{Y}^{n+1}  + \mathrm{d}\widetilde{U}^{n+1} \wedge \mathrm{d} \widetilde{V}^{n+1}
= \mathrm{d}\widetilde{X}^{n} \wedge \mathrm{d} \widetilde{Y}^{n}  + \mathrm{d}\widetilde{U}^{n} \wedge \mathrm{d} \widetilde{V}^{n},\quad a.s.$$  
where $n\in\{0,1,\ldots,N-1\}.$ 
However, the above scheme is symplectic in the extended phase space but not in the original phase space. 
To this issue, we turn to introduce the symmetric projection (\cite{HLW2006}), which ensures $(\widetilde U^n, \widetilde V^n) = (\widetilde X^n, \widetilde Y^n)$ for any $n\geq 0$. 
That is to say, it eliminates the problematic defect $(\widetilde X^n-\widetilde U^n, \widetilde Y^n-\widetilde V^n)$ and defines a discrete flow $\widetilde{\mathcal{F}}_{\Delta t}:  (X^n, Y^n)\rightarrow  (X^{n+1}, Y^{n+1})$ in the original phase space for $n\in\{0,1,\ldots,N-1\}$.

\noindent$\bf\hrulefill$

\noindent {\textbf {Scheme 2.1}} Semi-explicit symplectic scheme based on symmetric projection 

\vspace{0.5em}
\noindent Given an extended phase space scheme $\mathcal{F}_{\Delta t}$ and $Z^n =\left(X^n,Y^n\right),$ $n\in\{0,1,\ldots,N-1\},$ 
\begin{itemize}
\item[1.] Set $\xi^n:=\left((X^n)^\top, (X^n)^\top, (Y^n)^\top, (Y^n)^\top\right)\in \mathcal N:=\ker(A)$ and $\widetilde{\xi}^n:=\xi^n+A^\top \lambda,$  where \begin{equation*}
A=\left[\begin{array}{cccc}
I_d & -I_d & 0 &0\\
0 & 0 & I_d & -I_d 
\end{array}\right].
\end{equation*}
\item[2.]  Find $\lambda \in \mathbb{R}^{2d}$ such that ${\xi}^{n+1}:= \mathcal{F}_{\Delta t}(\widetilde{\xi}^{n})+A^\top \lambda \in \mathcal{N}.$
\item [3.]Let $\widetilde{\xi}^{n+1}=\mathcal F_{\Delta t}(\widetilde{\xi}^{n}).$ 
\item [4.] Let $\xi^{n+1}=\left((X^{n+1})^\top, (X^{n+1})^\top, (Y^{n+1})^\top, (Y^{n+1})^\top\right)^\top:=\widetilde{\xi}^{n+1}+A^\top \lambda.$ 
\item [5.] $Z^{n+1}:=\left((X^{n+1})^\top, (Y^{n+1})^\top\right)^\top.$
\end{itemize}
\noindent$\bf\hrulefill$

To make sure that $\widetilde{\mathcal{F}}_{\Delta t}:  (X^n, Y^n)\rightarrow  (X^{n+1}, Y^{n+1}),$ $n\in\{0,1,\ldots,N-1\},$  exists almost surely, it suffices to prove that for a given $\xi^n \in \mathcal{N}$, there exist $\xi^{n+1} \in \mathcal{N}$ and $\lambda \in \mathbb{R}^{2 d}$ satisfying 
$$
\xi^{n+1}
=
\mathcal{F}_{\Delta t}\left(\xi^n+A^\top \lambda\right)+A^\top \lambda,\quad A\xi^{n+1}=0.
$$
Now we define $\varphi :\left(\mathbb{R} \times \mathbb{R}^{4d}\right) \times\left(\mathbb{R}^{4d}\times \mathbb{R}^{2 d}\right) \rightarrow \mathbb{R}^{4d} \times \mathbb{R}^{2 d}$ by
\begin{align*}
\varphi \left(\left(\Delta t, \xi^n\right),(\xi^{n+1}, \lambda)\right):=\left[\begin{array}{c}
\xi^{n+1}-\mathcal{F}_{\Delta t}\left(\xi^n+A^\top \lambda\right)-A^\top \lambda\\
A \xi^{n+1}
\end{array}\right].
\end{align*}
It can be verified that $AA^\top=2I_{2d}$ and the Jacobian matrix
\begin{equation*}
\frac{\partial \varphi}{\partial(\xi^{n+1}, \lambda)}\left(\left(0, \xi^n\right),\left(\xi^n, 0\right)\right)=\left[\begin{array}{cc}
I_{4 d} & -2 A^\top \\
A & 0
\end{array}\right]
\end{equation*}
is invertible.  
Based on the implicit function theorem, we derive that there exist a neighborhood $\mathcal{B} \subset \mathbb{R}^{4d}$ of $\xi^n,$  $\varepsilon>0$, and mappings
$\Psi_{(\cdot)}:(-\varepsilon, \varepsilon) \times \mathcal{B} \rightarrow \mathbb{R}^{4 d}$ and $\rho_{(\cdot)}:(-\varepsilon, \varepsilon) \times B \rightarrow \mathbb{R}^{2 d}$ 
such that for any $\left(\Delta t, \xi^n\right) \in(-\varepsilon, \varepsilon) \times \mathcal{B}$,
\begin{equation*}
\varphi \left(\left(\Delta t, \xi^n\right),\left(\Psi_{\Delta t}\left(\xi^n\right), \rho_{\Delta t}\left(\xi^n\right)\right)\right)=0. 
\end{equation*}
Thus, there exists
$\widetilde{\mathcal{F}}_{\Delta t}: \mathcal B \rightarrow  \mathbb{R}^{2d}$ such that 
$Z^{n+1}=\widetilde{\mathcal{F}}_{\Delta t}(Z^{n}),$ $n\in\{0,1,\ldots,N-1\}.$ 
In the following, we show that $\widetilde{\mathcal{F}}_{\Delta t}$ is symplectic almost surely. 

\begin{thm}
\label{tm:finite symplectic}
If the extended phase space scheme based on $\mathcal{F}_{\Delta t}$ is symplectic, then the stochastic semi-explicit  scheme based on $\widetilde{\mathcal{F}}_{\Delta t}$ preserves symplectic structure almost surely, i.e., 
\begin{align*}
\mathrm{d} X^{n+1} \wedge \mathrm{d} Y^{n+1} =\mathrm{d} X^n \wedge \mathrm{d}Y^n, \quad a. s.,
\end{align*}
where $n\in\{0,1,\ldots,N-1\}$.
\end{thm}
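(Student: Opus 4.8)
The plan is to carry out the classical differential-form computation for symmetric projection (cf. \cite{HLW2006}) pathwise, i.e. for almost every fixed realization of the increments $\Delta W_r^n$, treating them as constants. Since the implicit function theorem has already furnished $\widetilde{\mathcal F}_{\Delta t}$ together with a differentiable multiplier $\lambda=\rho_{\Delta t}(\xi^n)$, all the quantities $\widetilde\xi^n$, $\widetilde\xi^{n+1}$, $\xi^{n+1}$ and $\lambda$ are differentiable functions of the initial data, so I may take exterior derivatives ``with respect to the initial value'' throughout. I would write the extended symplectic form as $\widetilde\omega=\tfrac12\,\mathrm d\zeta^\top\Omega\,\mathrm d\zeta$ with the constant skew matrix $\Omega$ representing $\mathrm dX\wedge\mathrm dY+\mathrm dU\wedge\mathrm dV$, and record two elementary facts about $A$: first, the identity $\Omega A^\top=A^\top J_{2d}$, and second, on $\mathcal N=\ker(A)$ one has $U=X$, $V=Y$, so that $\widetilde\omega$ restricted to $\mathcal N$ equals $2\,\mathrm dX\wedge\mathrm dY$. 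Consequently the target equality $\mathrm dX^{n+1}\wedge\mathrm dY^{n+1}=\mathrm dX^n\wedge\mathrm dY^n$ is equivalent to $\widetilde\omega(\xi^{n+1})=\widetilde\omega(\xi^n)$.

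Next I would differentiate the three defining relations of Scheme 2.1, namely $\widetilde\xi^n=\xi^n+A^\top\lambda$, $\widetilde\xi^{n+1}=\mathcal F_{\Delta t}(\widetilde\xi^n)$ and $\xi^{n+1}=\widetilde\xi^{n+1}+A^\top\lambda$, obtaining $\mathrm d\widetilde\xi^n=\mathrm d\xi^n+A^\top\mathrm d\lambda$ and $\mathrm d\widetilde\xi^{n+1}=\mathrm d\xi^{n+1}-A^\top\mathrm d\lambda$. Substituting these into $\widetilde\omega(\widetilde\xi^n)$ and $\widetilde\omega(\widetilde\xi^{n+1})$ and expanding the quadratic form produces, in each case, a ``clean'' term $\widetilde\omega(\xi^n)$ (resp. $\widetilde\omega(\xi^{n+1})$), a cross term $\pm(\mathrm d\xi)^\top\Omega A^\top\mathrm d\lambda$, and a term quadratic in $\mathrm d\lambda$ equal to $\tfrac12(A^\top\mathrm d\lambda)^\top\Omega(A^\top\mathrm d\lambda)$. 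The crucial point, and the reason symmetric projection is employed, is that the \emph{same} multiplier $\lambda$ appears in the pre- and post-correction; hence the two $\mathrm d\lambda$-quadratic terms are identical and cancel once I invoke the symplecticity hypothesis $\widetilde\omega(\widetilde\xi^{n+1})=\widetilde\omega(\widetilde\xi^n)$.

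After this cancellation the identity reduces to $\widetilde\omega(\xi^{n+1})-\widetilde\omega(\xi^n)=(\mathrm d\xi^{n+1})^\top\Omega A^\top\mathrm d\lambda+(\mathrm d\xi^n)^\top\Omega A^\top\mathrm d\lambda$, and I would annihilate the right-hand side using the first algebraic fact: $(\mathrm d\xi)^\top\Omega A^\top\mathrm d\lambda=(\mathrm d\xi)^\top A^\top J_{2d}\,\mathrm d\lambda=(A\,\mathrm d\xi)^\top J_{2d}\,\mathrm d\lambda$, which vanishes because $\xi^n,\xi^{n+1}\in\mathcal N$ forces $A\,\mathrm d\xi^n=A\,\mathrm d\xi^{n+1}=0$ (here $A$ is constant, so differentiating $A\xi=0$ gives $A\,\mathrm d\xi=0$). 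This yields $\widetilde\omega(\xi^{n+1})=\widetilde\omega(\xi^n)$, and restricting to $\mathcal N$ via the second fact gives $2\,\mathrm dX^{n+1}\wedge\mathrm dY^{n+1}=2\,\mathrm dX^n\wedge\mathrm dY^n$, which is the claim. Since every step holds for almost every realization of the noise, the preservation is almost sure.

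I expect the only genuinely delicate point to be the bookkeeping in the expansion of the quadratic forms: one must check that the two cross terms $(\mathrm d\xi)^\top\Omega(A^\top\mathrm d\lambda)$ and $(A^\top\mathrm d\lambda)^\top\Omega\,\mathrm d\xi$ combine with the correct sign (they are in fact equal, since the wedge-valued bilinear pairing $\mathrm u^\top\Omega\,\mathrm v$ is symmetric under exchange of $\mathrm u$ and $\mathrm v$ when $\Omega$ is skew), and that the $\mathrm d\lambda$-quadratic contributions from the two steps carry matching signs so that they cancel rather than double. Everything else rests on the constant-coefficient identities $\Omega A^\top=A^\top J_{2d}$ and $AA^\top=2I_{2d}$, both immediate from the explicit block form of $A$; no stochastic calculus is required beyond the almost-sure symplecticity of $\mathcal F_{\Delta t}$ already established.
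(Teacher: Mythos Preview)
Your argument is correct. It is the standard matrix/differential-form formulation of the symmetric-projection symplecticity proof: you expand $\widetilde\omega$ at $\widetilde\xi^{n}$ and $\widetilde\xi^{n+1}$, use the equal $A^\top\lambda$ shifts on both sides to cancel the $\mathrm d\lambda$-quadratic pieces, and kill the cross terms via $\Omega A^\top=A^\top J_{2d}$ together with $A\,\mathrm d\xi^n=A\,\mathrm d\xi^{n+1}=0$. All of the algebraic identities you invoke ($\Omega A^\top=A^\top J_{2d}$, symmetry of the wedge-valued pairing $u^\top\Omega v$ for skew $\Omega$, and $\widetilde\omega|_{\mathcal N}=2\,\mathrm dX\wedge\mathrm dY$) check out.

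The paper reaches the same conclusion by an equivalent but coordinate-wise computation: it writes $X^{n+1}=\tfrac12(\widetilde X^{n+1}+\widetilde U^{n+1})$, $Y^{n+1}=\tfrac12(\widetilde Y^{n+1}+\widetilde V^{n+1})$, expands $4\,\mathrm dX^{n+1}\wedge\mathrm dY^{n+1}$ into four wedge terms, and then rearranges using the extended symplecticity $\mathrm d\widetilde X^{n+1}\wedge\mathrm d\widetilde Y^{n+1}+\mathrm d\widetilde U^{n+1}\wedge\mathrm d\widetilde V^{n+1}=\mathrm d\widetilde X^{n}\wedge\mathrm d\widetilde Y^{n}+\mathrm d\widetilde U^{n}\wedge\mathrm d\widetilde V^{n}$ together with the relation $\widetilde X^{n}-\widetilde U^{n}=2\lambda_1=\widetilde U^{n+1}-\widetilde X^{n+1}$ (and its $Y$-analogue). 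The latter plays exactly the role of your ``same $\lambda$ before and after'' cancellation, appearing there as the identity $\mathrm d(\widetilde U^{n+1}-\widetilde X^{n+1})\wedge\mathrm d(\widetilde V^{n+1}-\widetilde Y^{n+1})=\mathrm d(\widetilde X^{n}-\widetilde U^{n})\wedge\mathrm d(\widetilde Y^{n}-\widetilde V^{n})=4\,\mathrm d\lambda_1\wedge\mathrm d\lambda_2$. So the logical content is identical; your version is more coordinate-free and makes transparent why the argument would extend verbatim to any linear constraint manifold $\ker(A)$ satisfying $\Omega A^\top=A^\top J$, while the paper's component calculation makes the $\mathrm d\lambda_1\wedge\mathrm d\lambda_2$ term explicit at each stage.
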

\begin{proof}
For any $n\in\{0,1,\ldots,N-1\}.$ 
Assume that $\widetilde{\xi}^{n}= ((\widetilde{X}^{n})^\top, (\widetilde{U}^{n})^\top, (\widetilde{Y}^{n})^\top,\\ (\widetilde{V}^{n})^\top)^\top$, $\lambda=\left( \lambda_1^\top,  \lambda_2^\top\right)^\top$, where $ \lambda_1, \lambda_2 :\mathbb R^{4d} \rightarrow \mathbb{R}^d$,
then  
\begin{align*}
\widetilde{\xi}^{n+1}&= \left((\widetilde{X}^{n+1})^\top, (\widetilde{U}^{n+1})^\top, (\widetilde{Y}^{n+1})^\top, (\widetilde{V}^{n+1})^\top\right)^\top\\
&
=\left((X^{n+1}-\lambda_1)^\top, (X^{n+1}+\lambda_1)^\top, (Y^{n+1}-\lambda_2)^\top, (Y^{n+1}+\lambda_2)^\top\right)^\top,\\
\widetilde{X}^{n} - &\widetilde{U}^{n} = 2\lambda_1= \widetilde{U}^{n+1} - \widetilde{X}^{n+1},
~~\widetilde{Y}^{n} - \widetilde{V}^{n} = 2\lambda_2= \widetilde{V}^{n+1} - \widetilde{Y}^{n+1}.
\end{align*}
Then $X^{n+1}= \frac{1}{2}(\widetilde{X}^{n+1}+\widetilde{U}^{n+1} ), Y^{n+1}= \frac{1}{2}(\widetilde{Y}^{n+1}+\widetilde{V}^{n+1} )$ and 
\begin{align*}
\nonumber
&4\mathrm{d} X^{n+1} \wedge \mathrm{d} Y^{n+1} \\
= & \mathrm{d}\widetilde{X}^{n+1} \wedge \mathrm{d} \widetilde{Y}^{n+1}  + \mathrm{d}\widetilde{X}^{n+1} \wedge \mathrm{d} \widetilde{V}^{n+1}+ \mathrm{d}\widetilde{U}^{n+1} \wedge \mathrm{d} \widetilde{Y}^{n+1}+ \mathrm{d}\widetilde{U}^{n+1} \wedge \mathrm{d} \widetilde{V}^{n+1}.
\end{align*}
According to the symplecticity of $\mathcal F_{\Delta t },$ we have  
\begin{align*}
\nonumber
&\mathrm{d}\widetilde{X}^{n+1} \wedge \mathrm{d} \widetilde{Y}^{n+1}  + \mathrm{d}\widetilde{U}^{n+1} \wedge \mathrm{d} \widetilde{V}^{n+1}
= \mathrm{d}\widetilde{X}^{n} \wedge \mathrm{d} \widetilde{Y}^{n}  + \mathrm{d}\widetilde{U}^{n} \wedge \mathrm{d} \widetilde{V}^{n}\\
\nonumber
=&\mathrm{d}(X^n+\lambda_1)\wedge\mathrm{d}(Y^n+\lambda_2)+\mathrm{d}(X^n-\lambda_1)\wedge\mathrm{d}(Y^n-\lambda_2)\\
=&2\mathrm{d}X^n\wedge\mathrm{d}Y^n+2\mathrm{d}\lambda_1\wedge\mathrm{d}\lambda_2.
\end{align*} 
Moreover, since 
\begin{equation}\label{eq;3-3}
 \mathrm{d}(\widetilde{U}^{n+1} - \widetilde{X}^{n+1}) \wedge \mathrm{d}(\widetilde{V}^{n+1} - \widetilde{Y}^{n+1})
 = \mathrm{d}(\widetilde{X}^{n} - \widetilde{U}^{n} ) \wedge \mathrm{d}(\widetilde{Y}^{n} - \widetilde{V}^{n}),
\end{equation}
we obtain 
\begin{align*}
&\mathrm{d}\widetilde{X}^{n+1} \wedge \mathrm{d} \widetilde{Y}^{n+1}  + \mathrm{d}\widetilde{U}^{n+1} \wedge \mathrm{d} \widetilde{V}^{n+1}-
\mathrm{d}\widetilde{U}^{n+1} \wedge \mathrm{d}\widetilde{Y}^{n+1}-  \mathrm{d}\widetilde{X}^{n+1} \wedge \mathrm{d}\widetilde{V}^{n+1}\\
=&2\mathrm{d}X^n\wedge\mathrm{d}Y^n+2\mathrm{d}\lambda_1\wedge\mathrm{d}\lambda_2-\mathrm{d}(X^n-\lambda_1)\wedge\mathrm{d}(Y^n+\lambda_2)-\mathrm{d}(X^n+\lambda_1)\wedge\mathrm{d}(Y^n-\lambda_2)\\
=&2\mathrm{d}X^n\wedge\mathrm{d}Y^n+2\mathrm{d}\lambda_1\wedge\mathrm{d}\lambda_2-2\mathrm{d}X^n\wedge\mathrm{d}Y^n+2\mathrm{d}\lambda_1\wedge\mathrm{d}\lambda_2=4\mathrm{d}\lambda_1\wedge\mathrm{d}\lambda_2.
\end{align*}
As a result, 
\begin{align*}
4\mathrm{d} X^{n+1} \wedge \mathrm{d} Y^{n+1} 
= & 2\mathrm{d}\widetilde{X}^{n+1} \wedge \mathrm{d} \widetilde{Y}^{n+1} +2\mathrm{d}\widetilde{U}^{n+1} \wedge \mathrm{d} \widetilde{V}^{n+1}-4\mathrm{d}\lambda_1\wedge\mathrm{d}\lambda_2\\
=&4\mathrm{d}X^n\wedge\mathrm{d}Y^n+4\mathrm{d}\lambda_1\wedge\mathrm{d}\lambda_2-4\mathrm{d}\lambda_1\wedge\mathrm{d}\lambda_2,
\end{align*}
which yields the result.
\end{proof}

The above theorem illustrates the geometric property of the proposed scheme $\widetilde{\mathcal F}_{\Delta t}$, now we turn to the preservation of quadratic invariants for the SHS.

\begin{thm}
For any $n\in\{0,1,\ldots,N-1\}$, the proposed stochastic semi-explicit symplectic scheme $(X^{n+1}, Y^{n+1})=\widetilde{\mathcal{F}}_{\Delta t}(X^n, Y^n)$ with $\gamma_r=0,$ $r=0,1,\ldots,m,$ 
preserves the quadratic invariant of SHS  \eqref{eq;SHS} as follows
$$
Q_\kappa(Z):=\frac{1}{2} Z^\top \kappa Z=\frac{1}{2} X^\top \kappa_{11} X+X^\top \kappa_{12} Y+\frac{1}{2} Y^\top \kappa_{22} Y, 
$$
where $\kappa=\begin{bmatrix}
    \kappa_{11} &\kappa_{12}\\
    \kappa_{12}^\top & \kappa_{22}
\end{bmatrix}$ is a $2d$-dimensional symmetric and positive definite matrix. 
 
\end{thm}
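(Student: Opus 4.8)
The plan is to lift $Q_\kappa$ to a quadratic invariant of the extended system \eqref{eq;ExtendSHS} and then push it through the symmetric projection. Since $Q_\kappa$ is a quadratic invariant of \eqref{eq;SHS}, the Stratonovich chain rule forces the infinitesimal condition $Z^\top\kappa J_{2d}\nabla H_r(Z)=0$ for all $Z\in\mathbb{R}^{2d}$ and all $r$; writing $\nabla H_r=((\partial_q H_r)^\top,(\partial_p H_r)^\top)^\top$ for the gradients in the two argument blocks, this reads
\begin{equation}
\label{eq;quadcond}
(\kappa_{11}q+\kappa_{12}p)^\top\partial_p H_r(q,p)=(\kappa_{12}^\top q+\kappa_{22}p)^\top\partial_q H_r(q,p),\quad\forall(q,p),\ \forall r,
\end{equation}
the quadratic analogue of \eqref{conditon1}. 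When $\gamma_r=0$ we have $\Theta=0$, so $\mathcal{F}^3_{\Delta t}$ is the identity and $\mathcal{F}_{\Delta t}$ reduces to a composition of the exact flows $\mathcal{F}^1_{\Delta t}$ and $\mathcal{F}^2_{\Delta t}$ of $H_r(X,V)$ and $H_r(U,Y)$. I would then introduce the lifted form
\begin{equation*}
\widehat{Q}(\zeta):=\frac12\begin{pmatrix}U\\Y\end{pmatrix}^\top\kappa\begin{pmatrix}X\\V\end{pmatrix},\qquad\zeta=(X^\top,U^\top,Y^\top,V^\top)^\top,
\end{equation*}
whose pairing is built from the argument pairs $(U,Y)$ and $(X,V)$ of the two Hamiltonian copies; on the diagonal $\mathcal{N}$, where $U=X$ and $V=Y$, it collapses to $\widehat{Q}=Q_\kappa(X,Y)$.

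The second step is to verify that $\widehat{Q}$ is an exact pathwise invariant of the extended flow. Because $\mathcal{F}^1_{\Delta t}$ freezes $X,V$ and updates $U,Y$ by the increments $\sum_r\partial_p H_r(X,V)\Delta W_r^n$ and $-\sum_r\partial_q H_r(X,V)\Delta W_r^n$, and $\widehat{Q}$ is affine in $(U,Y)$ for frozen $(X,V)$, the increment $\Delta\widehat{Q}$ is computed exactly and equals $\tfrac12\sum_r\Delta W_r^n$ times $(\kappa_{11}X+\kappa_{12}V)^\top\partial_p H_r(X,V)-(\kappa_{12}^\top X+\kappa_{22}V)^\top\partial_q H_r(X,V)$, which vanishes by \eqref{eq;quadcond} evaluated at $(q,p)=(X,V)$. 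The same computation for $\mathcal{F}^2_{\Delta t}$, which freezes $U,Y$, uses \eqref{eq;quadcond} at $(U,Y)$. Since $\mathcal{F}^3_{\Delta t}=\mathrm{id}$, both \eqref{split1} and \eqref{strang} preserve $\widehat{Q}$, so $\widehat{Q}(\widetilde\xi^{n+1})=\widehat{Q}(\widetilde\xi^{n})$ almost surely.

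The final step transfers this conservation through the projection. Reusing the relations from the proof of Theorem \ref{tm:finite symplectic}, namely $\widetilde\xi^n=\xi^n+A^\top\lambda$ and $\widetilde\xi^{n+1}=\xi^{n+1}-A^\top\lambda$ with the same $\lambda=(\lambda_1^\top,\lambda_2^\top)^\top$ and $A^\top\lambda=(\lambda_1^\top,-\lambda_1^\top,\lambda_2^\top,-\lambda_2^\top)^\top$, I would expand $\widehat{Q}$ at the perturbed points. A direct expansion, in which every term linear in $\lambda$ cancels by the symmetry of $\kappa$, yields
\begin{align*}
\widehat{Q}(\xi^n+A^\top\lambda)&=Q_\kappa(Z^n)+R(\lambda),\\
\widehat{Q}(\xi^{n+1}-A^\top\lambda)&=Q_\kappa(Z^{n+1})+R(\lambda),
\end{align*}
where $R(\lambda)=-\tfrac12(\lambda_1^\top\kappa_{11}\lambda_1-2\lambda_1^\top\kappa_{12}\lambda_2+\lambda_2^\top\kappa_{22}\lambda_2)$ is even in $\lambda$ and hence identical on both lines. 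Equating the left-hand sides via $\widehat{Q}(\widetilde\xi^{n+1})=\widehat{Q}(\widetilde\xi^{n})$ cancels $R(\lambda)$ and gives $Q_\kappa(Z^{n+1})=Q_\kappa(Z^n)$, as claimed.

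The main obstacle is discovering the correct lift $\widehat{Q}$: it must simultaneously restrict to $Q_\kappa$ on $\mathcal{N}$, be conserved by each split flow — which is exactly what forces the pairing to couple $(U,Y)$ with $(X,V)$ rather than the naive $(X,Y)$ with $(U,V)$ — and produce a projection defect $R(\lambda)$ that is the same before and after the step. This last property is what the symmetric projection (the same multiplier $\lambda$ added on both ends) delivers; an unsymmetric projection would leave mismatched defects and destroy the invariant.
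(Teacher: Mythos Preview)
Your proposal is correct and follows essentially the same route as the paper. Your lifted form $\widehat{Q}(\zeta)=\tfrac12(U^\top,Y^\top)\kappa(X^\top,V^\top)^\top$ coincides (after using $\kappa_{11}=\kappa_{11}^\top$) with the paper's $\hat{Q}_\kappa(\zeta)=\tfrac12(X^\top\kappa_{11}U+X^\top\kappa_{12}Y+U^\top\kappa_{12}V+Y^\top\kappa_{22}V)$, and both proofs proceed by (i) showing this lift is conserved by each split map $\mathcal{F}^1_{\Delta t},\mathcal{F}^2_{\Delta t}$ via the pointwise identity \eqref{eq;quadcond}, and (ii) passing through the symmetric projection. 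The only differences are presentational: the paper packages step (ii) as the matrix identity $\widetilde\kappa(I_{4d}-A^\top A)=\hat\kappa$, whereas you compute the bilinear form directly and isolate the even defect $R(\lambda)$; and your observation that $\widehat{Q}$ is affine in the variables moved by each sub-flow makes the conservation in step (i) slightly more transparent than the paper's explicit expansion of the composed map.
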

\begin{proof} 
Denoting 
\begin{align*}
\kappa_1=\begin{bmatrix}
\kappa_{12} & -\kappa_{11}\\
\kappa_{22} &-\kappa_{12}^\top
\end{bmatrix},\quad 
\hat \kappa=
\begin{bmatrix}
0 & \kappa_{11} &\kappa_{12} & 0\\
\kappa_{11} & 0 & 0 &\kappa_{12}\\
\kappa_{12}^\top & 0 & 0 &\kappa_{22}\\
0 & \kappa_{12}^\top & \kappa_{22} & 0
\end{bmatrix}, \quad
\widetilde \kappa=
\begin{bmatrix}
\kappa_{11} & 0 & 0 &\kappa_{12}\\
0 & \kappa_{11} &\kappa_{12} & 0\\
0 & \kappa_{12}^\top & \kappa_{22} & 0\\
\kappa_{12}^\top & 0 & 0 &\kappa_{22}
\end{bmatrix}, 
\end{align*}
we have
$
\frac 12(X^\top,Y^\top)^\top\kappa_1
\begin{bmatrix}
\frac{\partial H_r(X,Y)}{\partial X}\\
\frac{\partial H_r(X,Y)}{\partial Y}
\end{bmatrix}=0,$ $r=0,1,\ldots,m,
$
which means 
\begin{align*}
(X^\top,V^\top)^\top\kappa_1
\begin{bmatrix}
\frac{\partial H_r(X,V)}{\partial X}\\
\frac{\partial H_r(X,V)}{\partial V}
\end{bmatrix}+
(U^\top,Y^\top)^\top\kappa_1
\begin{bmatrix}
\frac{\partial H_r(U,Y)}{\partial U}\\
\frac{\partial H_r(U,Y)}{\partial Y}
\end{bmatrix}
=0,
\end{align*}
equivalently, 
\begin{align*}
\zeta
\hat\kappa
J_{2d}
\bigg[\frac{\partial H_r(X,V)}{\partial X}^\top,\frac{\partial H_r(U,Y)}{\partial U} ^\top,\frac{\partial H_r(U,Y)}{\partial Y}^\top,\frac{\partial H_r(X,V)}{\partial V}^\top\bigg]^\top
=0,
\end{align*}
where $\zeta= (X^\top,U^\top,Y^\top,V^\top)^\top,$ $r=0,1,\ldots,m.$
As a result, $Q_\kappa(Z)$ is a quadratic invariant of SHS \eqref{eq;SHS} if and only if
$$
\hat{Q}_\kappa(\zeta):=\frac{1}{2}\left(X^\top\kappa_{11} U+X^\top \kappa_{12} Y+U^\top \kappa_{12} V+Y^\top \kappa_{22} V\right)=\frac 14\zeta^\top\hat\kappa\zeta
$$
is a quadratic invariant of the extended SHS \eqref{eq;ExtendSHS}. 
Since $\xi^{n+1}=\widetilde{\xi}^{n+1}+A^\top \lambda$ and $\xi^{n}=\widetilde{\xi}^{n}-A^\top \lambda$ with $\lambda=-\frac 12 A\widetilde{\xi}^{n+1}=\frac 12 A\widetilde{\xi}^{n}$,  we obtain
\begin{align*}
&Q_\kappa\left(Z^{n+1}\right)-Q_\kappa\left(Z^{n}\right)=\frac 14(\xi^{n+1})^\top\widetilde \kappa\xi^{n+1}-\frac 14(\xi^{n})^\top\widetilde \kappa\xi^{n}\\
=&\frac 14 (\widetilde \xi^{n+1}+A^\top\lambda)^\top \widetilde\kappa(\widetilde \xi^{n+1}+A^\top\lambda) -\frac 14 (\widetilde \xi^{n}-A^\top\lambda)^\top \widetilde\kappa(\widetilde \xi^{n}-A^\top\lambda)\\
=&\frac 14(\widetilde{\xi}^{n+1})^\top\widetilde\kappa(I_{4d}-A^\top A)\widetilde{\xi}^{n+1}-\frac 14 (\widetilde{\xi}^{n})^\top\widetilde\kappa(I_{4d}-A^\top A)\widetilde{\xi}^{n}\\
=&\hat{Q}_\kappa(\widetilde{\xi}^{n+1})-\hat{Q}_\kappa(\widetilde{\xi}^{n}),
\end{align*} 
where $\widetilde{\xi}^{n+1}=\mathcal F_{\Delta t}(\widetilde{\xi}^{n}).$ 
\begin{itemize}
    \item If $\mathcal F_{\Delta t}=\mathcal F^1_{\Delta t} \star \mathcal F^2_{\Delta t}$, 
\begin{align*}
\mathcal F_{\Delta t}: \widetilde\xi^n=
\begin{pmatrix}
    \widetilde X^n\\
    \widetilde U^n\\
    \widetilde Y^n\\
    \widetilde V^n
\end{pmatrix}
\rightarrow
\begin{pmatrix}
\widetilde X^n+\sum\limits_{r=0}^m
\frac{\partial H_r(\hat  U^n,\hat Y^n)}{\partial Y}\Delta W_r^n\\
\widetilde U^n+\sum\limits_{r=0}^m
\frac{\partial H_r(\widetilde X^n,\widetilde V^n)}{\partial V}\Delta W_r^n\\
\widetilde Y^n-\sum\limits_{r=0}^m\frac{\partial H_r(\widetilde X^n,\widetilde V^n)}{\partial X}\Delta W_r^n\\
\widetilde V^n-\sum\limits_{r=0}^m
\frac{\partial H_r(\hat  U^n,\hat Y^n)}{\partial U}\Delta W_r^n
\end{pmatrix}
=\widetilde\xi^{n+1},
\end{align*}
where $\hat U^n=\widetilde U^n+\sum\limits_{r=0}^m
\frac{\partial H_r(\widetilde X^n,\widetilde V^n)}{\partial V}\Delta W_r^n,$ $\hat Y^n=\widetilde Y^n+\sum\limits_{r=0}^m
\frac{\partial H_r(\widetilde X^n,\widetilde V^n)}{\partial X}\Delta W_r^n,$  $n\in\{0,1,\ldots,N-1\}.$ 
\item For the Strang splitting $\mathcal{F}_{\Delta t}: =\mathcal{F}_{\Delta t / 2}^1 \star \mathcal{F}_{\Delta t}^2\star \mathcal{F}_{\Delta t / 2}^1,$
we have 
\begin{align*}
\mathcal F_{\Delta t}: 
\begin{pmatrix}
    \widetilde X^n\\
    \widetilde U^n\\
    \widetilde Y^n\\
    \widetilde V^n
\end{pmatrix}
\rightarrow
\begin{pmatrix}
\widetilde X^n+\sum\limits_{r=0}^m
\frac{\partial H_r(\hat  U^n,\hat Y^n)}{\partial Y}\Delta W_r^n\\
\hat U^n+\sum\limits_{r=0}^m
\frac{\partial H_r(\widetilde X^{n+1},\widetilde V^{n+1})}{\partial V}(W_r(t_{n+1})-W_r(t_n+\frac {\Delta t}2))\\
\hat Y^n-\sum\limits_{r=0}^m\frac{\partial H_r(\widetilde X^{n+1},\widetilde V^{n+1})}{\partial X}(W_r(t_{n+1})-W_r(t_n+\frac {\Delta t}2))\\
\widetilde V^n-\sum\limits_{r=0}^m
\frac{\partial H_r(\hat  U^n,\hat Y^n)}{\partial U}\Delta W_r^n
\end{pmatrix},
\end{align*}
where $n\in\{0,1,\ldots,N-1\},$ 
\begin{align*}
&\widetilde\xi^n=
    [(\widetilde X^n)^\top,
    (\widetilde U^n)^\top,
    (\widetilde Y^n)^\top,
    (\widetilde V^n)^\top]^\top,\\
    &\widetilde\xi^{n+1}=[(\widetilde X^{n+1})^\top,
    (\widetilde U^{n+1})^\top,
    (\widetilde Y^{n+1})^\top,
    (\widetilde V^{n+1})^\top]^\top,\\
 &   \hat Y^n=\widetilde Y^n -\sum\limits_{r=0}^m\frac{\partial H_r(\widetilde X^{n},\widetilde V^{n})}{\partial X}(W_r(t_n+\frac {\Delta t}2)-W_r(t_n)),\\
 &\hat U^n=\widetilde U^n -\sum\limits_{r=0}^m\frac{\partial H_r(\widetilde X^{n},\widetilde V^{n})}{\partial V}(W_r(t_n+\frac {\Delta t}2)-W_r(t_n)).
\end{align*}
\end{itemize}
\noindent 
It can be found that the numerical scheme given by $\widetilde{\xi}^{n+1}=\mathcal F_{\Delta t}(\widetilde{\xi}^{n})$ preserves the quadratic invariant $\hat Q_{\kappa}$ because of
$$
\frac 12(X^\top,V^\top)^\top\kappa_1
\begin{bmatrix}
\frac{\partial H_r(X,V)}{\partial X}\\
\frac{\partial H_r(X,V)}{\partial V}
\end{bmatrix}=0,\quad 
\frac 12(U^\top,Y^\top)^\top\kappa_1
\begin{bmatrix}
\frac{\partial H_r(U,Y)}{\partial U}\\
\frac{\partial H_r(U,Y)}{\partial Y}
\end{bmatrix}=0,$$
where $r=0,1,\ldots,m.$ 
As a result, $Q_\kappa\left(Z^{n+1}\right)=Q_\kappa\left(Z^{n}\right).$
\end{proof}

Now we perform numerical experiments to illustrate the validity of the proposed stochastic semi-explicit symplectic scheme for nonseparable SHSs by comparing with the  midpoint scheme and symplectic Euler scheme for \eqref{eq;SHS} with $m=1$ as follows
    \begin{equation}
    \label{midpoint}
\left\{
\begin{aligned}
X_{n+1}=X_{n}+ \sum\limits_{r=0}^1\frac{\partial H_r}{\partial Y}(\frac{X_{n+1}+X_{n}}2, \frac{Y_{n+1}+Y_{n}}2) \Delta W_r^n,\\
Y_{n+1}=Y_{n}-\sum\limits_{r=0}^1\frac{\partial H_r}{\partial X}(\frac{X_{n+1}+X_{n}}2, \frac{Y_{n+1}+Y_{n}}2) \Delta W_r^n,
\end{aligned}
\right.
\end{equation}
\begin{equation}
\label{SymEuler}
\left\{
\begin{aligned}
X_{n+1}=&X_{n}+ \sum\limits_{r=0}^1\frac{\partial H_r(X_{n+1}, Y_{n})}{\partial Y} \Delta W_r^n\\
&-\frac 12\big(\frac{\partial^2 H_1}{\partial Y^2}\frac{\partial H_1}{\partial X}
-
\frac 12\frac{\partial^2 H_1}{\partial Y\partial X}\frac{\partial H_1}{\partial Y}\big)(X_{n+1}, Y_{n}) \Delta t,\\
Y_{n+1}=&Y_{n}-\sum\limits_{r=0}^1\frac{\partial H_r(X_{n+1}, Y_{n})}{\partial X}\Delta W_r^n\\
&-\frac 12\big(\frac{\partial^2 H_1}{\partial X^2}\frac{\partial H_1}{\partial Y}-
\frac 12\frac{\partial^2 H_1}{\partial Y\partial X}\frac{\partial H_1}{\partial X}\big)(X_{n+1}, Y_{n}) \Delta t.   
\end{aligned}
\right.
\end{equation} 
For convenience, we denote semi-explicit symplectic schemes $\widetilde F_{\Delta t}$ based on \eqref{split1}, $\widetilde F_{\Delta t}$ based on \eqref{strang}, \eqref{midpoint} and \eqref{SymEuler} by SES-SP-1, SES-SP-2, Midpoint and Symplectic Euler, respectively.  
Recall that we need to solve the nonlinear equations
\begin{align*}
\varphi \left(\left(\Delta t, \xi^n\right),(\xi^{n+1}, \lambda)\right):=\left[\begin{array}{c}
\xi^{n+1}-\mathcal{F}_{\Delta t}\left(\xi^n+A^\top \lambda\right)-A^\top \lambda\\
A \xi^{n+1}
\end{array}\right].
\end{align*}
for $(\xi^{n+1}, \lambda),$ $n\in\{0,1,\ldots,N-1\}.$  
By taking product of $A$ in the first formula, one may eliminate $\xi^{n+1}$  and obtain $\mu$ satisfying 
$$A\mathcal{F}_{\Delta t}\left(\xi^n+A^\top \lambda\right)+AA^\top \lambda=0.$$ 
Its Jacobian matrix is
$$
D \phi_1\left(\left(\Delta t, \xi^n\right),(\xi^{n+1}, \lambda)\right)
:=A D \mathcal F_{\Delta t}\left(\xi^n+A^\top \mu\right) A^\top+A A^\top,
$$
and when $\Delta t=0$, $D \phi_1\left(\left(0, \xi^n\right),(\xi^{n+1}, \lambda)\right)=2A A^\top=4 I_{2 d}.$
Thus, we can exploit this simple structure of the Jacobian to construct the simplified Newton approximation
$$
\lambda^{(k+1)}=\lambda^{(k)}-\frac{1}{4} [A\mathcal{F}_{\Delta t}\left(\xi^n+A^\top \lambda^{(k)}\right)+AA^\top \lambda^{(k)}],
$$
where we start with $\lambda^{(0)}=0$. 
The simplified Newton iteration converges quickly with a reasonable tolerance $\epsilon$ imposed so they stop after $N$ iterations
$
\left\|\lambda^{(N+1)}-\lambda^{(N)}\right\|<\epsilon
$ and we set $\lambda=\lambda^{(N)}$. 
In all the numerical experiments, the expectation is approximated by taking the average over 1000 realizations, and the exact solution is computed by implementing the corresponding proposed numerical schemes with fine mesh.  

\begin{ex}
\begin{figure}[h!]
	\centering
	\subfigure{
		\begin{minipage}{12.5cm}
\centering
\includegraphics[height=3.5cm,width=4cm]{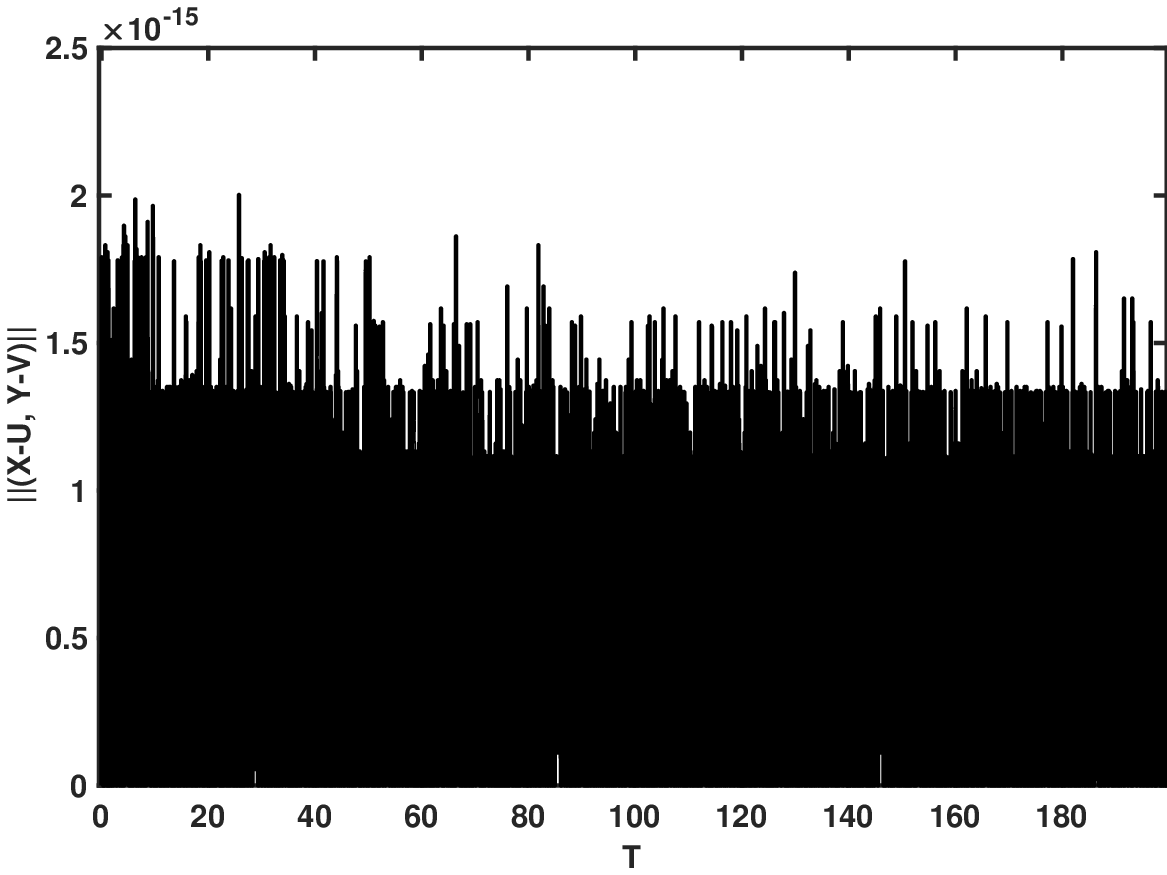}
\includegraphics[height=3.5cm,width=4cm]{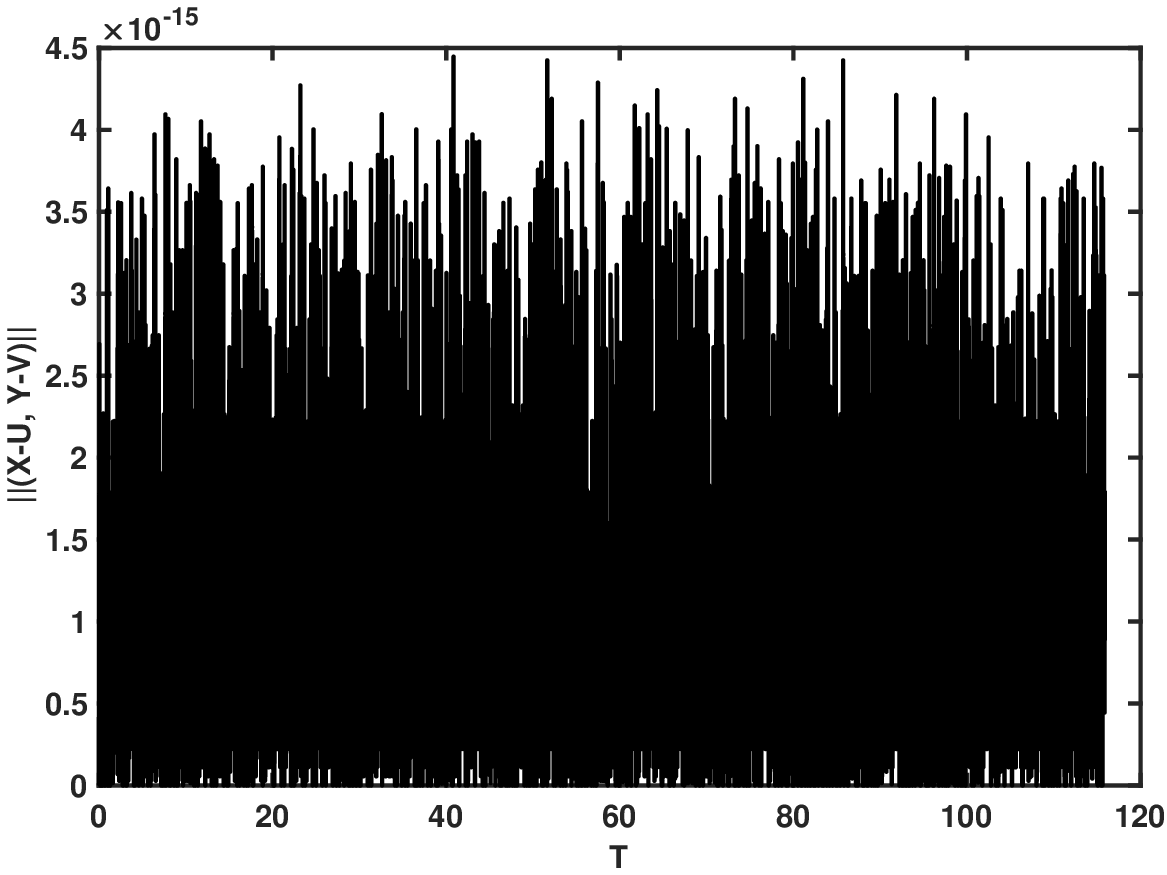}
\end{minipage}
	}
	\caption{Time evolutions of norm $\|(X-U, Y-V)\|$ with 
 $\Delta t = 1\times 10^{-2}, c = 0.5, \gamma = 1 $: (left)SES-SP-1, (right) SES-SP-2.} 
	\label{fig9}
\end{figure}
\begin{figure}[h!]
	\centering
	\subfigure{
		\begin{minipage}{12.5cm}
\centering
\includegraphics[height=3.5cm,width=4cm]{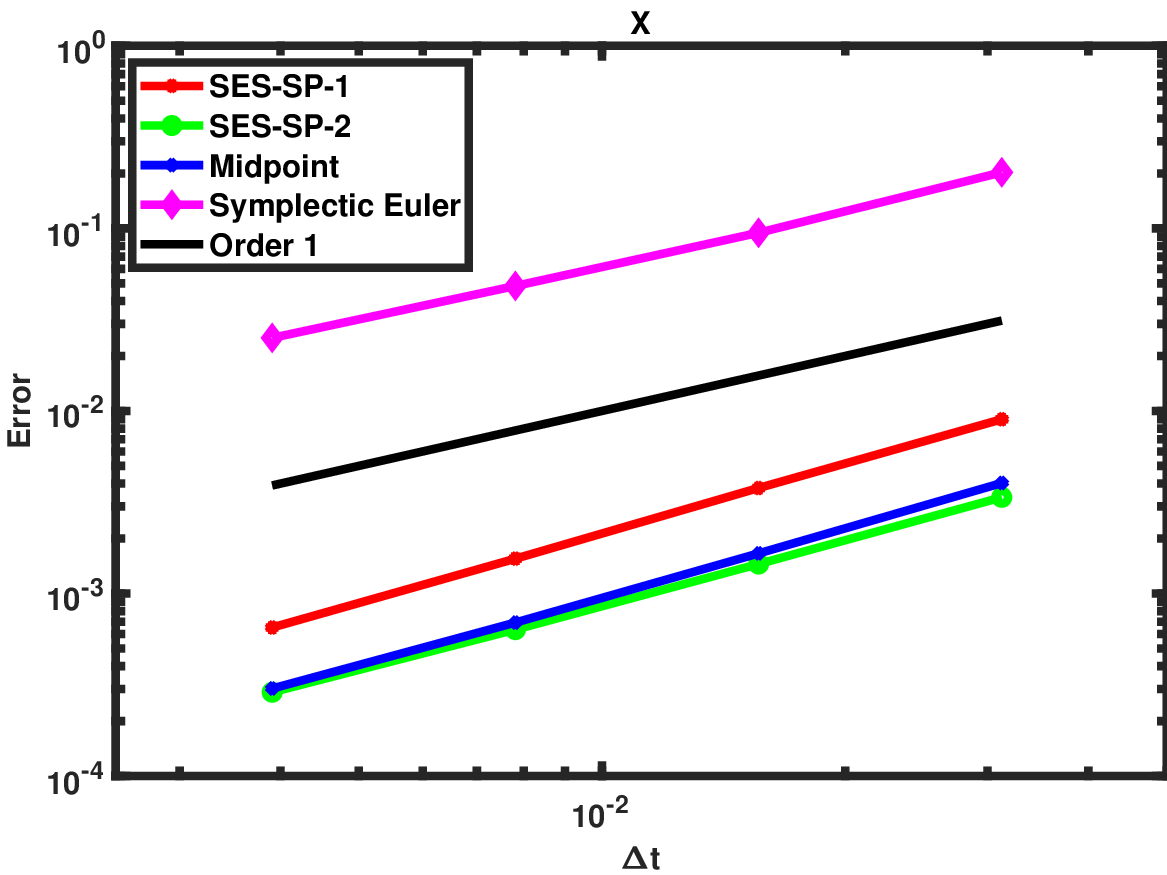}
\includegraphics[height=3.5cm,width=4cm]{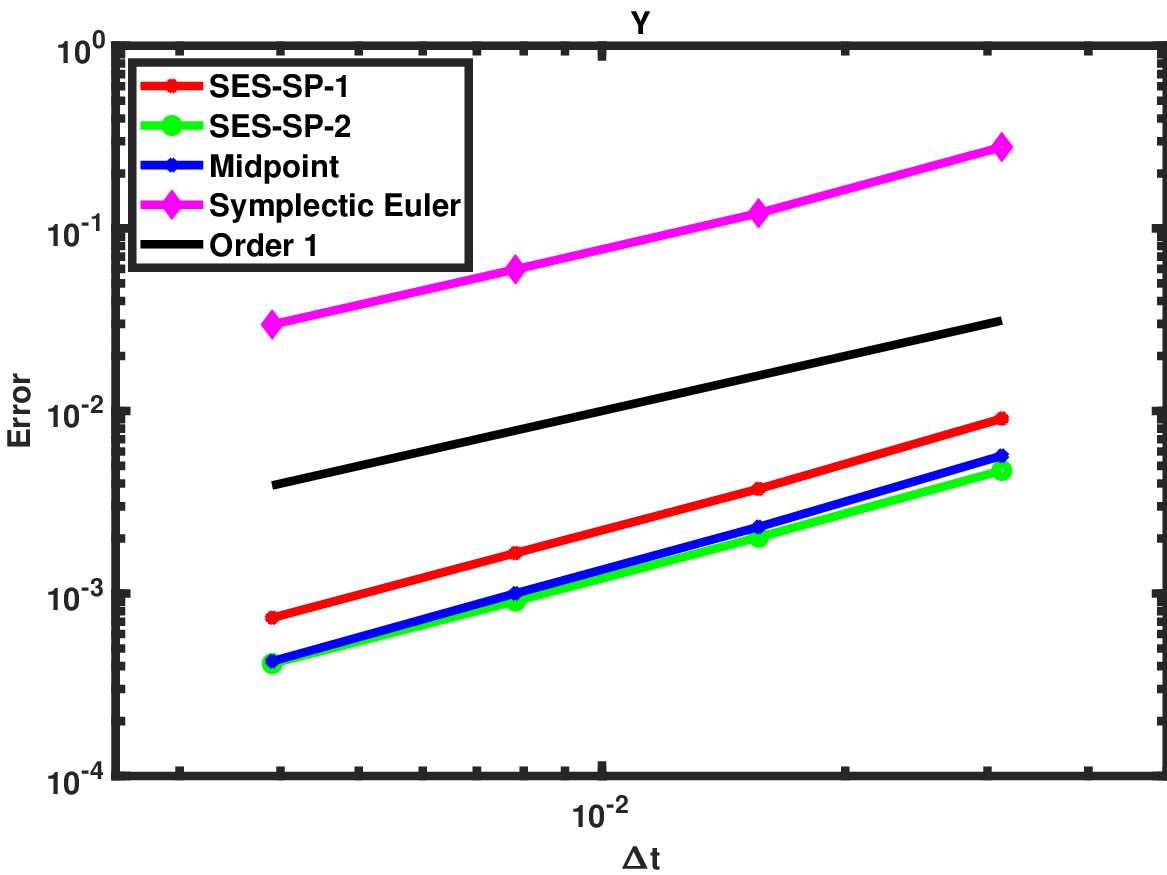}
\end{minipage}
	}
	\caption{Mean-square convergence order of various schemes in time with $T=1, c = 0.15, \gamma = 0.01$.} 
	\label{fig7}
\end{figure} 

\begin{table}[htbp]  
\setlength{\abovecaptionskip}{0.1cm}
	\setlength{\belowcaptionskip}{-0.cm}
	\caption{Numerical errors and CPU time of numerical schemes for Example 1 with 
$c=0.4,\gamma = 0.5$.}
	\centering
	\begin{tabularx}{\textwidth}{p{0.9cm}<{\centering}|p{1.7cm}p{1.7cm}<{\centering}p{1.7cm}<{\centering}|p{1.5cm}<{\centering}p{1.5cm}<{\centering}p{1.5cm}<{\centering}}
		\toprule
		 & \multicolumn{1}{c}{}  & \multicolumn{2}{c|}{Errors} & \multicolumn{3}{
c}{CPU time} \\
		\toprule
		$\Delta t$ & SES-SP-1 &SES-SP-2 & Midpoint &SES-SP-1 &SES-SP-2 & Midpoint  \\
		\midrule
		$1/2^6$     &  5.0315e-02  & 2.5054e-02 &2.4028e-02& 4.38s & 6.37s &  
10.41s  \\
		$1/2^8$     &  1.1440e-02  & 5.5905e-03 &5.6065e-03& 16.24s & 22.50s& 38.68s  \\
		$1/2^{10}$ &  2.6952e-03  & 1.3353e-03 &1.4048e-03& 60.62s & 82.83s & 141.23s \\
		$1/2^{12}$ &  5.8261e-04  & 3.0282e-04 &3.4664e-04& 248.72s & 324.61s & 554.06s \\
		\bottomrule
	\end{tabularx}
	\label{table1}
\end{table}

\begin{figure}
	\centering
	\subfigure{
		\begin{minipage}{12.5cm}
\centering
\includegraphics[height=3.5cm,width=4cm]{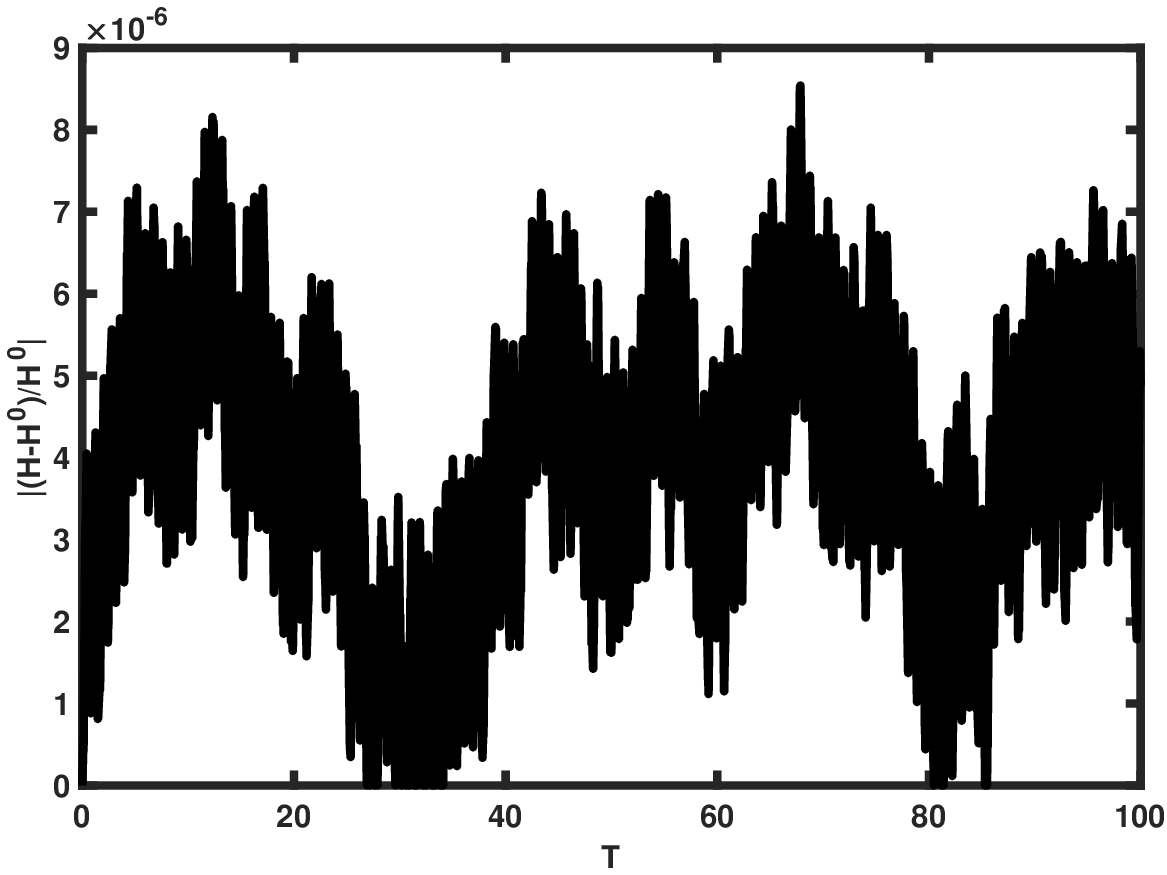}
\includegraphics[height=3.5cm,width=4cm]{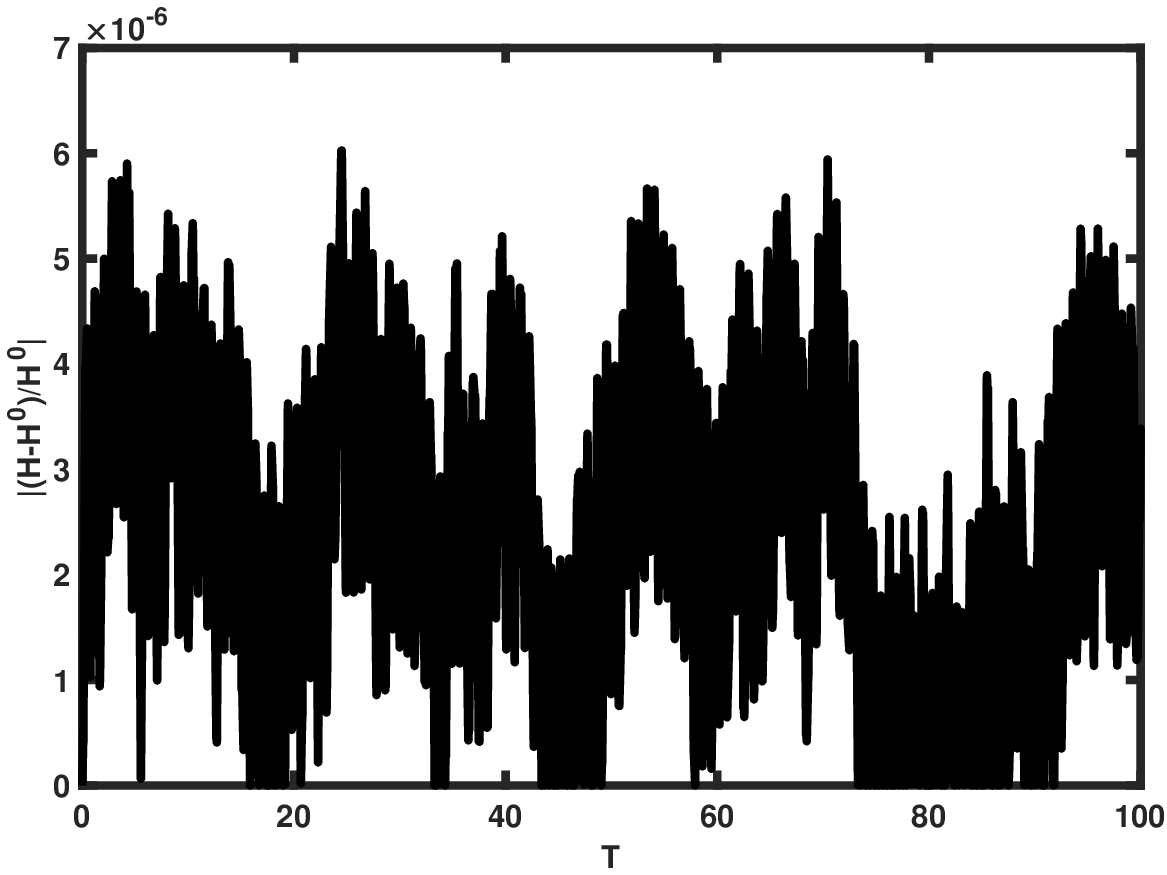}\\
\includegraphics[height=3.5cm,width=4cm]{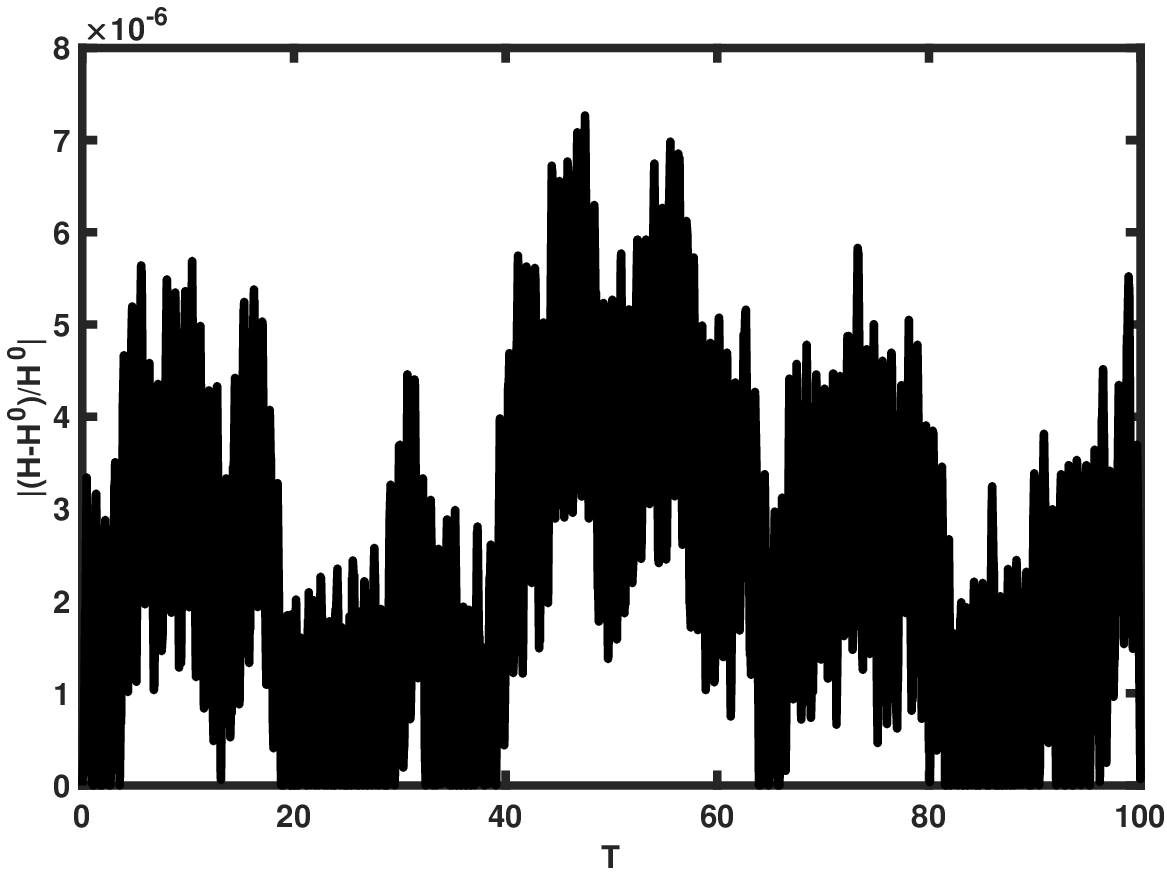}
\includegraphics[height=3.5cm,width=4cm]{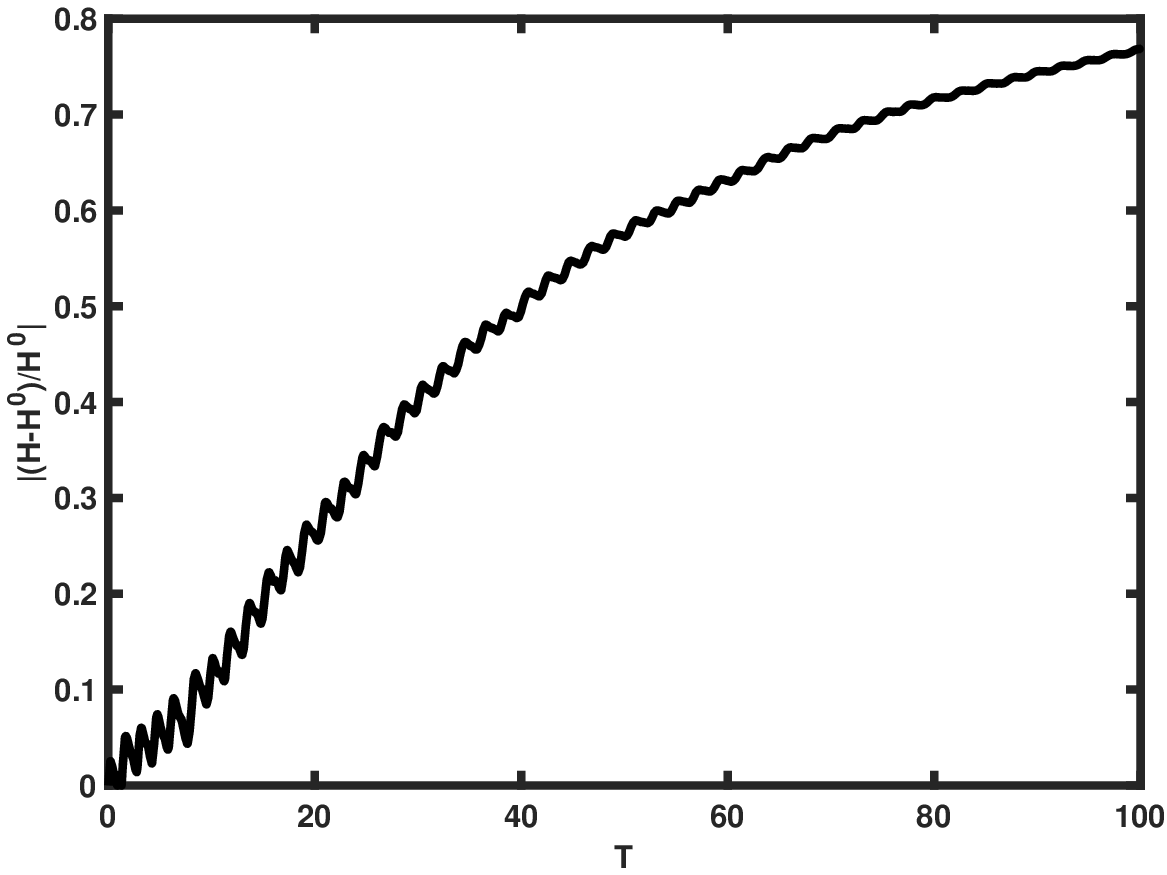}
\end{minipage}
	}
	\caption{Relative Hamiltonian $H_0$ of various schemes: with $T =100, \Delta t = 1\times 10^{-4}, c = 0.1, \gamma = 0 $: (1)SES-SP-1, (2) SES-SP-2, (3)Midpoint, (4)Symplectic Euler.} 
	\label{fig8}
\end{figure}

Consider 
\begin{equation}
\label{eq;Ex1}
\left\{\begin{aligned}
&d X= (X^2+1)Ydt + c(X^2+1)Y\circ d W,\quad X(0) = 0,\\
&d Y= -(Y^2+1)Xdt - c(Y^2+1)X\circ d W,\quad Y(0) = -3,
\end{aligned}\right.
\end{equation}
where Hamlitonians $H_0(X,Y)=\frac{1}{2}(X^2+1)(Y^2+1)$ and $H_1=cH_0$ with $c\in\mathbb R\setminus \{0\}$ are invariants. 
Fig. \ref{fig9} illustrates that the defect $(\widetilde X^n-\widetilde U^n, \widetilde Y^n-\widetilde V^n)$ of semi-explicit symplectic schemes $\widetilde F_{\Delta t}$ based on \eqref{split1} and \eqref{strang} is very small, confirming the feasibility of the proposed scheme. In Fig. \ref{fig7}, the mean-square convergence order of errors $\sqrt{\mathbb E(\|X(T)-X_N\|^2}$ and $\sqrt{\mathbb E(\|Y(T)-Y_N\|^2)}$ against $\Delta t$ with $T = 1$ is displayed on a log-log scale with $\Delta t=2^{-s}, s\in\{5, 6, 7, 8\}$. The slopes of the four stochastic symplectic schemes closely align with a reference line (black color). 

Table \ref{table1} presents the efficiency and accuracy comparisons of stochastic semi-explicit symplectic schemes and stochastic midpoint scheme under the same time stepsize $\Delta t=2^{-s}, s\in\{6, 8, 10, 12\}$.
It is evident that the stochastic semi-explicit symplectic scheme $\widetilde F_{\Delta t}$ based on \eqref{split1} exhibits higher efficiency, while the semi-explicit symplectic scheme $\widetilde F_{\Delta t}$ based on \eqref{strang} and the stochastic midpoint scheme are more accurate.
Furthermore, the error of $\widetilde F_{\Delta t}$ based on \eqref{strang} is comparable to that of the midpoint scheme, but the computational time of the midpoint scheme is approximately 1.5 times longer than that of the former. 
Additionally, Fig. \ref{fig8} demonstrates that stochastic semi-explicit symplectic schemes and stochastic midpoint scheme preserve the relative Hamiltonian. However, it is worth noting that as time progresses toward infinity, the relative Hamiltonian of the symplectic Euler scheme increases, indicating that the stochastic symplectic Euler scheme is unable to preserve the Hamiltonian $H_0.$ 
\end{ex}

\begin{ex}
\begin{figure}
	\centering
	\subfigure{
		\begin{minipage}{12.5cm}
\centering
\includegraphics[height=3.5cm,width=4cm]{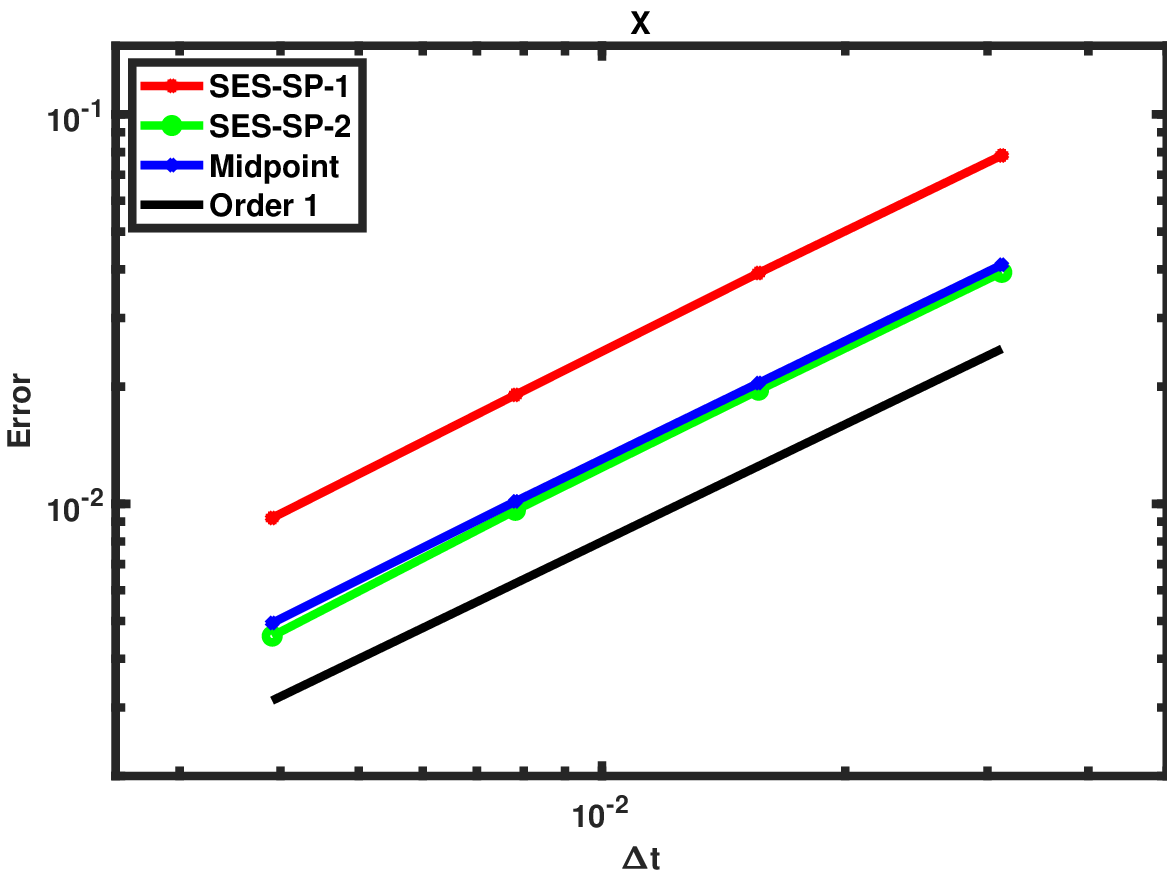}
\includegraphics[height=3.5cm,width=4cm]{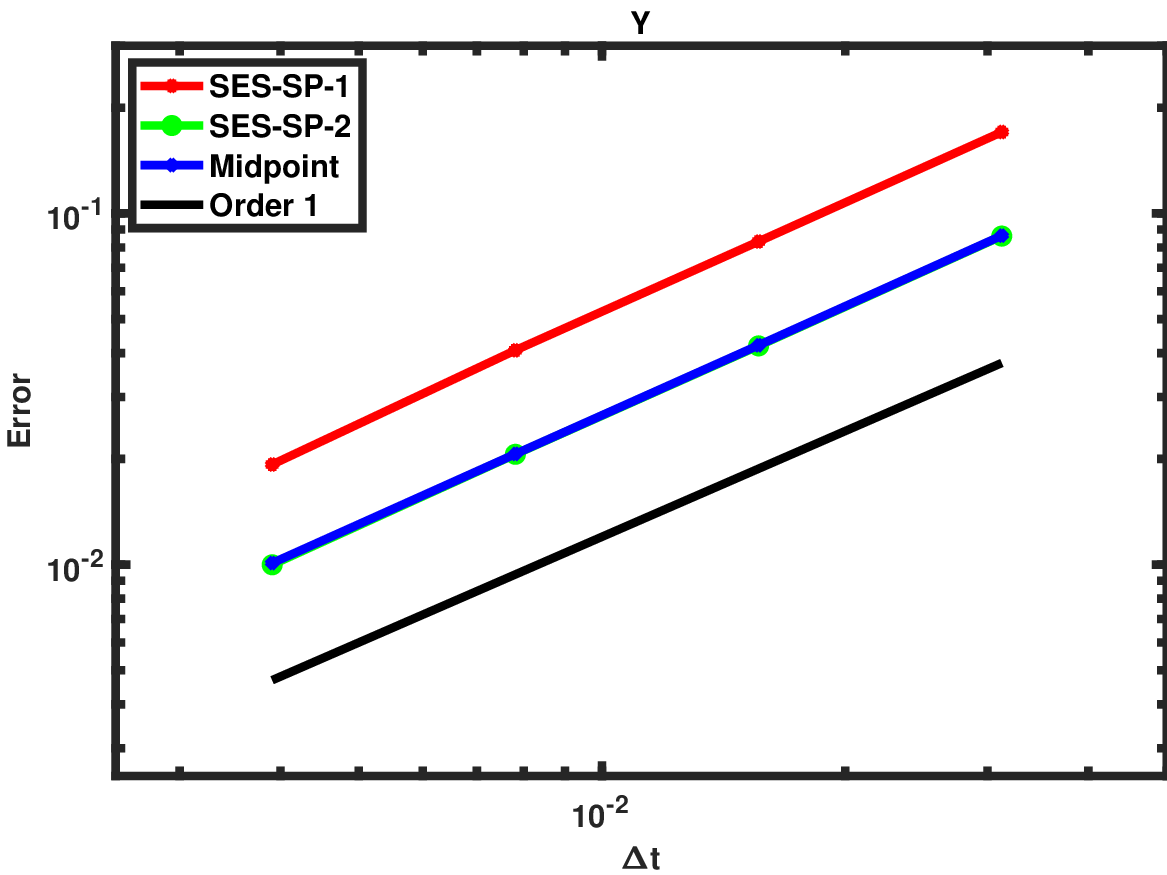}
\end{minipage}
	}
	\caption{Mean-square convergence order of various schemes with $T=1, c = 0.5, \gamma = 0.5$ and $\Delta t=2^{-s}, s\in\{5, 6, 7, 8\}$.} 
	\label{fig1}
\end{figure}

\begin{figure}
	\centering
	\subfigure{
		\begin{minipage}{12.5cm}
\centering
\includegraphics[height=3.5cm,width=4cm]{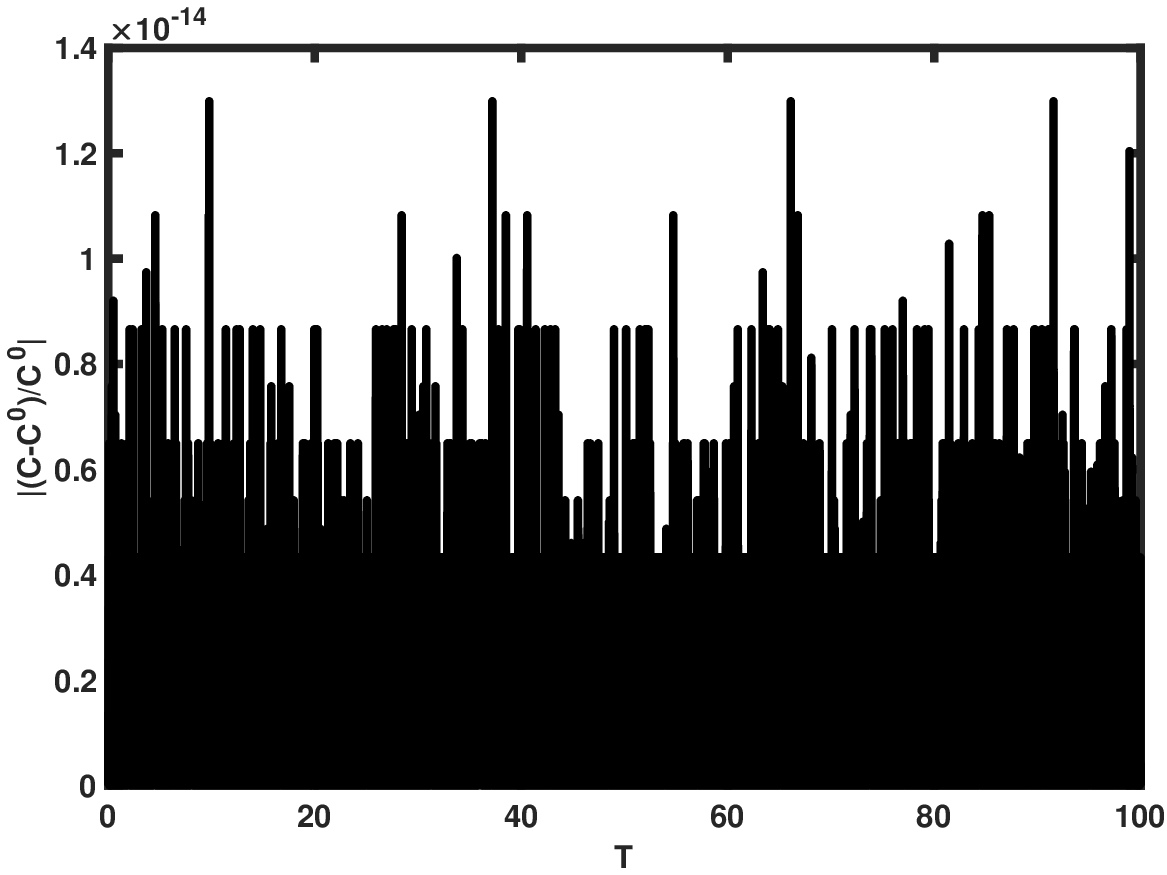}
\includegraphics[height=3.5cm,width=4cm]{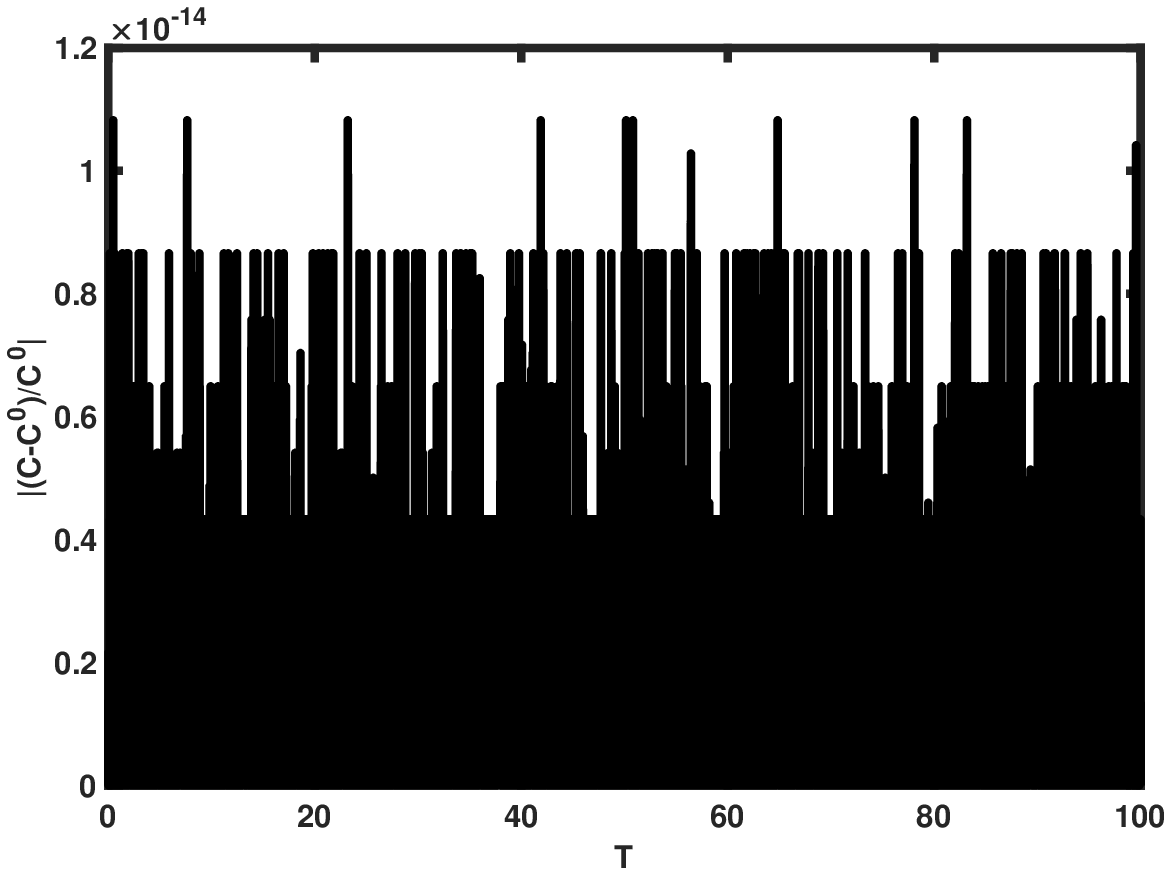}
\includegraphics[height=3.5cm,width=4cm]{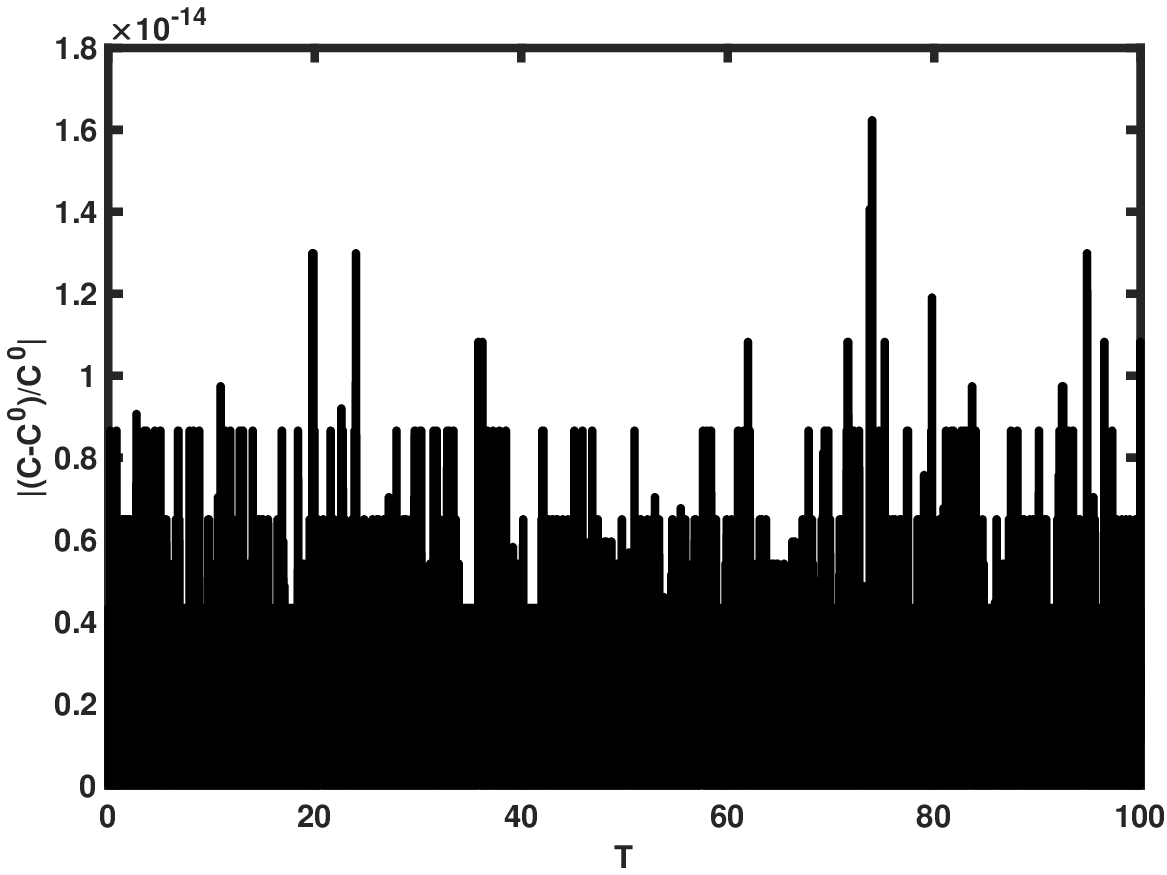}
\end{minipage}
	}
	\caption{Relative Casimir function with $T =100, \Delta t = 1.25\times 10^{-2}, c = 1, \gamma = 1 $: (1)SES-SP-1, (2) SES-SP-2, (3)Midpoint.} 
	\label{fig2}
\end{figure}

\begin{table}[htbp]  
	\setlength{\abovecaptionskip}{0.1cm}
	\setlength{\belowcaptionskip}{-0.cm}
	\caption{Numerical errors and CPU time of numerical schemes for Example 2 with $c=0.2, \gamma = 2$.}
	\centering
	\begin{tabularx}{\textwidth}{p{0.9cm}<{\centering}|p{1.7cm}p{1.7cm}<{\centering}p{1.7cm}<{\centering}|p{1.5cm}<{\centering}p{1.5cm}<{\centering}p{1.5cm}<{\centering}}
		\toprule
		 & \multicolumn{1}{c}{}  & \multicolumn{2}{c|}{Errors} & \multicolumn{3}{c}{CPU time} \\
		\toprule
		$\Delta t$ & SES-SP-1 &SES-SP-2 & Midpoint &SES-SP-1 &SES-SP-2 & Midpoint  \\
		\midrule
		$1/2^7$     &  7.6965e-03  & 3.8460e-03 &4.0291e-03& 10.94s & 12.41s &  18.29s  \\
		$1/2^9$     &  1.7873e-03  & 9.0099e-04 &9.0980e-04& 39.37s & 45.46s& 62.83s  \\
		$1/2^{13}$ &  1.1155e-04  & 5.6244e-05 &5.7225e-05& 605.72s & 625.89s &974.68s \\
		$1/2^{14}$ &  5.0833e-05  & 2.6950e-05 &2.7953e-05& 1201.37s & 1245.18s & 1878.58s \\
		\bottomrule
	\end{tabularx}
		\label{table2}
\end{table}
Consider the 2-dimensional nonseparable SHS
\begin{equation}
\left\{
\begin{aligned}
&d\left[\begin{array}{l}
X \\
Y
\end{array}\right]=\left[\begin{array}{cc}
0 & -1 \\
1 & 0
\end{array}\right] \nabla H(X, Y)(dt + c \circ \mathrm{dW}(t)),\\
&X(0)=\ln y_3^0,\quad  Y(0)=\ln y_2^0,
\end{aligned}
\right.
\end{equation}
where Hamiltonian 
$$H(X, Y)=ab \exp(v(X-\mathcal{C}+b Y))+\exp(-Y)-\omega Y -a\exp(X)- \mu X$$
with $\mathcal{C}=-\frac{1}{v} \ln y^0_1-b \ln y^0_2+\ln y^0_3,$  $a,b,\mu,\omega\in\mathbb R\backslash \{0\}$ and $y_1^0,y_2^0,y_3^0>0.$ 
Let $
y_1 = \exp(v(X-\mathcal{C}+bY)),$ $y_2 =\exp(-Y),$ $y_3=\exp(X).$ 
We have the three-dimensional Lotka-Volterra system as follows
\begin{equation}
\label{eq;LV system}
\left\{
\begin{aligned}
&d\left[\begin{array}{l}
y_1\\
y_2\\
y_3
\end{array}\right]
=
\begin{bmatrix}
0 & vy_1y_2 & bvy_1y_3 \\
-vy_1y_2 & 0 & -y_2y_3 \\
-bvy_1y_3 & y_2y_3 & 0 
\end{bmatrix}
\nabla \tilde H(y_1,y_2,y_3
)(d t+c \circ d W(t)),\\
&(y_1(0)=y^0_1,\, y_2(0)=y^0_2,\, y_3(0)=y^0_3,
\end{aligned}
\right.
\end{equation}
with
$\widetilde H(\mathbf{y})=a b y_1+y_2+\omega\ln y_2-a y_3-\mu \ln y_3.$ 
The three-dimensional Lotka--Volterra system 
describes the interactions between three populations or factors and hold an invariant $\mathbf{C}(y_1,y_2,y_3)=-\frac{1}{v}\ln y_1-b \ln y_2+\ln y_3 =\mathcal C$, which is called Casimir function. 

In the numerical experiment, we set $a=-2, b=-1$, $v=-0.5, \omega=1, \mu=2$, and initial conditions $y_1^0=1, y_2^0=1.9, y_3^0=0.5.$  
Compared with the reference line in Fig. \ref{fig1}, it can be observed that the mean-square convergence order of semi-explicit symplectic  schemes and midpoint scheme is 1. 
Fig. \ref{fig2} shows that the numerical scheme based on the nonlinear mapping and stochastic semi-explicit symplectic schemes 
$\widetilde F_{\Delta t}$ with \eqref{split1} or \eqref{strang}  for the stochastic Lotka-Volterra system \eqref {eq;LV system} can more accurately preserve the Casimir function  compared to the midpoint scheme, because the relative Casimir function error for the midpoint scheme sometimes is larger. 

Table \ref{table2} shows that the semi-implicit symplectic scheme $\widetilde F_{\Delta t}$  derived from \eqref{strang} is the most accurate, whereas the semi-implicit symplectic scheme $\widetilde F_{\Delta t}$  derived from \eqref{split1} is the most efficient. 
Furthermore, the error of $\widetilde F_{\Delta t}$ based on \eqref{strang} is similar to that of the midpoint scheme.  
However, under the same level of accuracy, the semi-implicit symplectic scheme  $\widetilde F_{\Delta t}$ based on \eqref{strang} takes less time.

\end{ex}

\begin{ex}
\begin{figure}[h!]
	\centering
	\subfigure{
		\begin{minipage}{12.5cm}
\centering
\includegraphics[height=3.5cm,width=4cm]{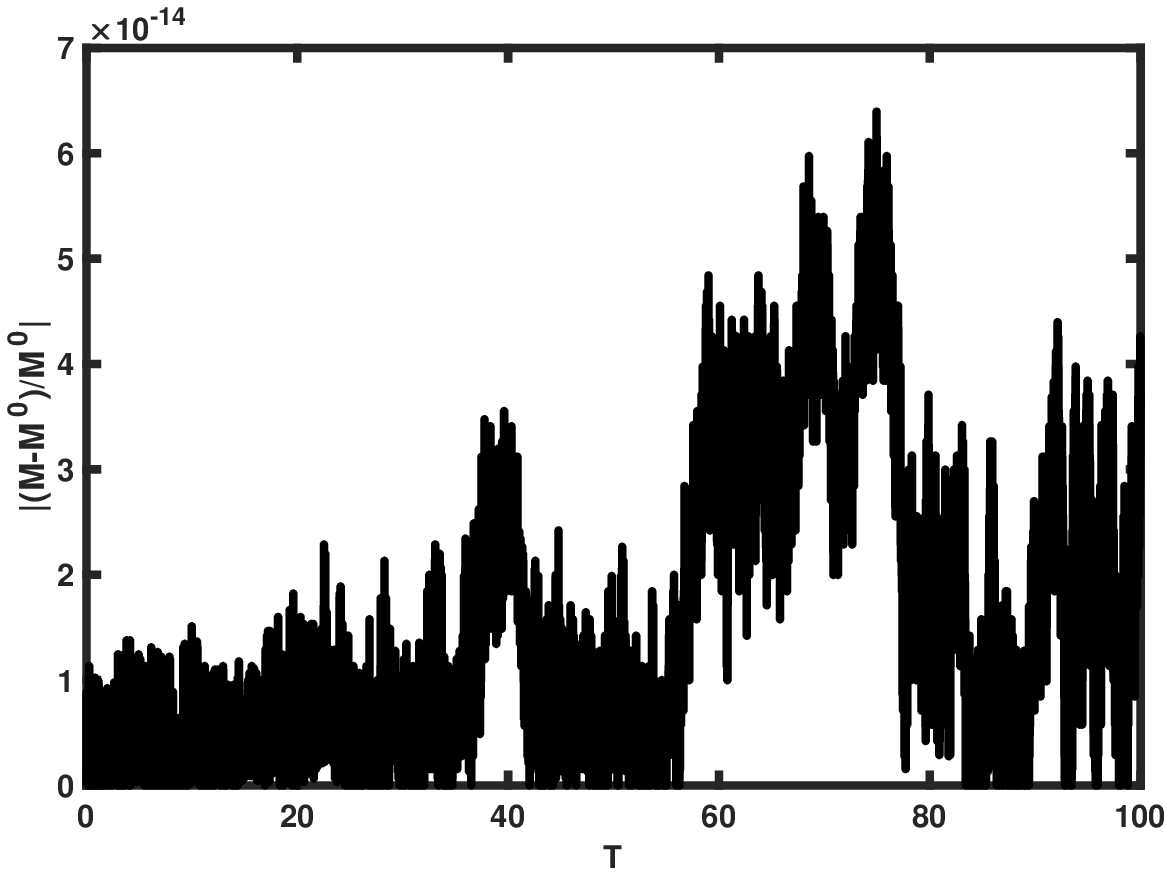}
\includegraphics[height=3.5cm,width=4cm]{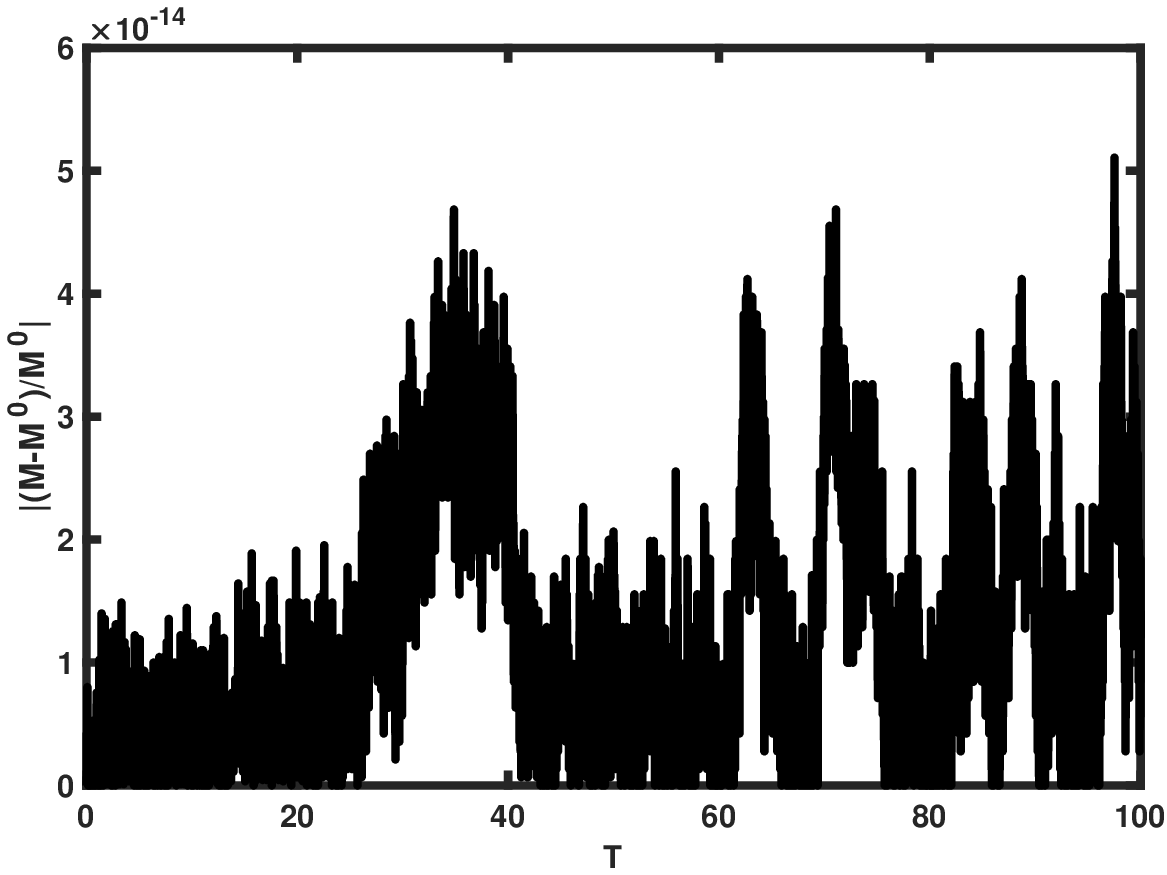}
\end{minipage}
	}
	\caption{Relative linear invariant with $T =100, \Delta t = 1\times 10^{-2}, c = 0.5, \gamma = 0.5 $: (1)SES-SP-1, (2) SES-SP-2.
 } 
	\label{fig11}
\end{figure}

\begin{figure}[h!]
	\centering
	\subfigure{
		\begin{minipage}{12.5cm}
\centering
\includegraphics[height=3.5cm,width=4cm]{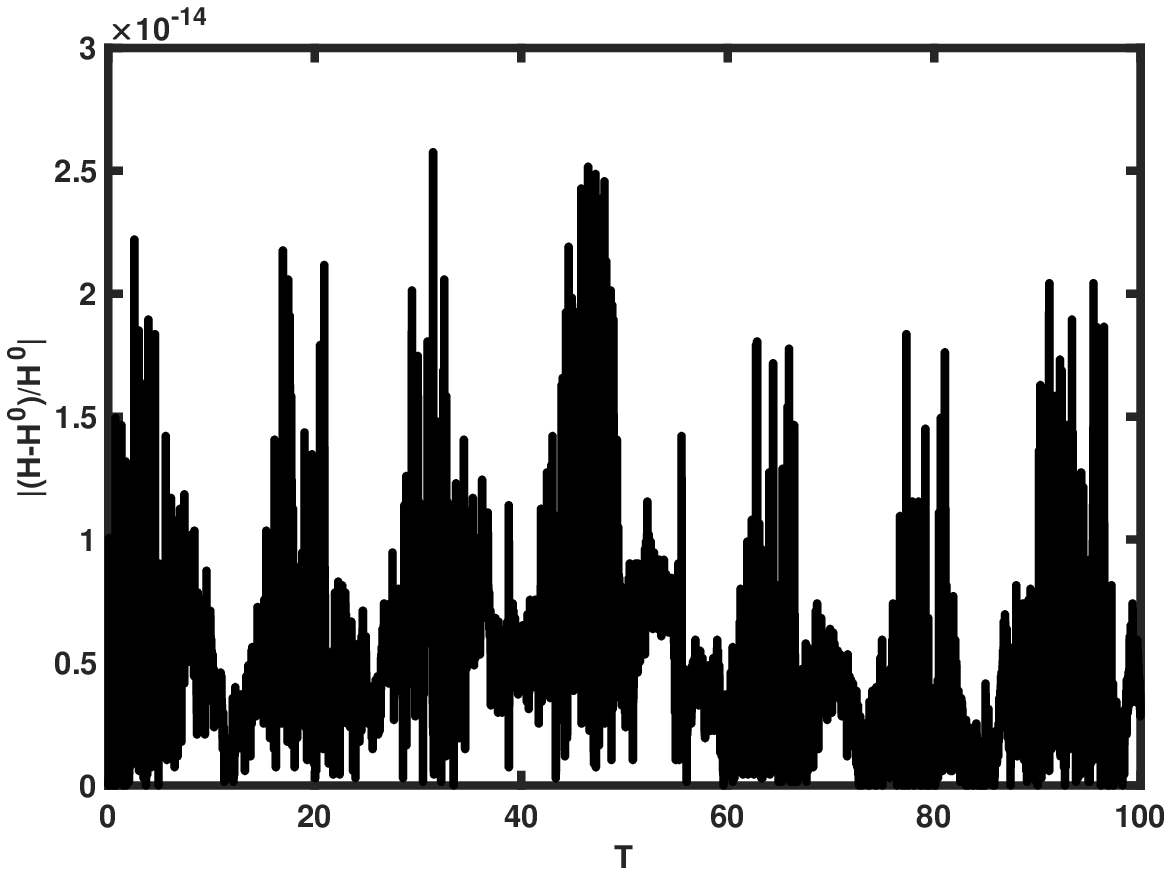}
\includegraphics[height=3.5cm,width=4cm]{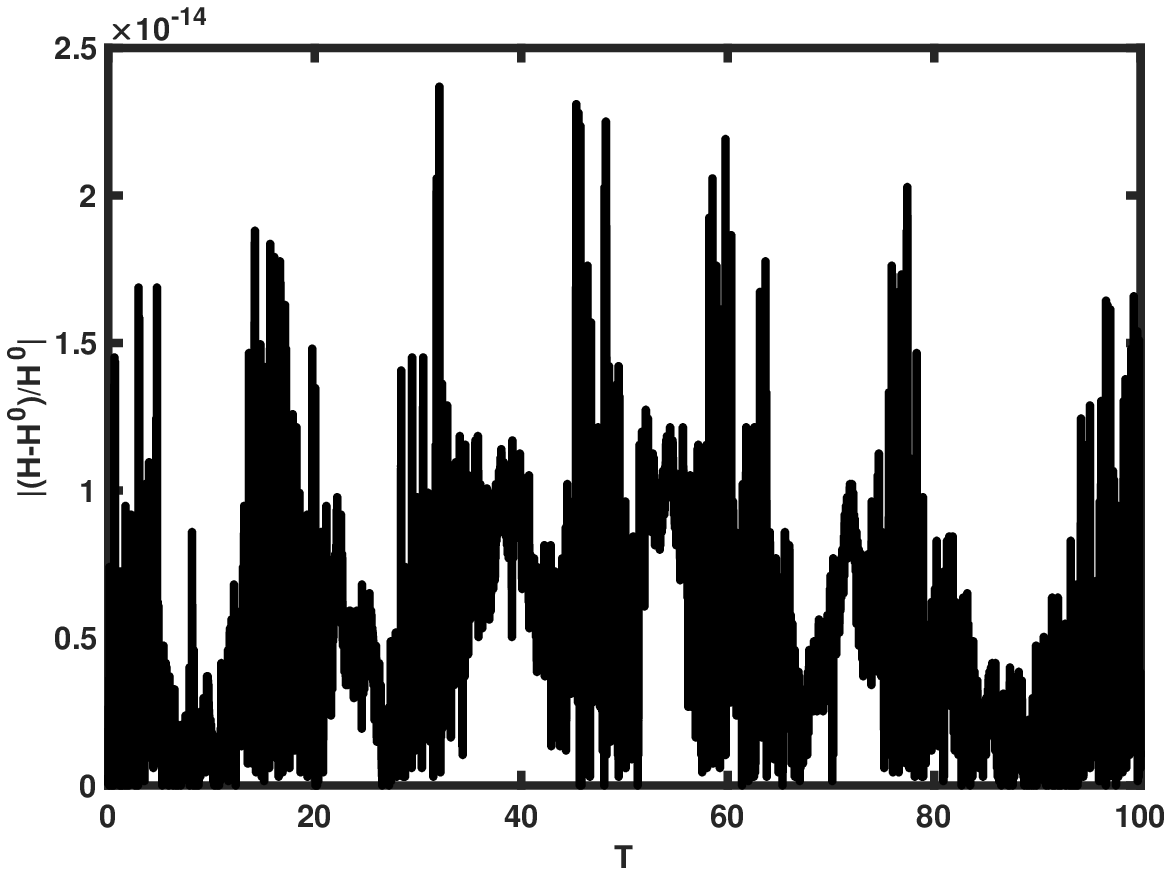}
\end{minipage}
	}
	\caption{Relative quadratic invariant with $T =100, \Delta t = 1\times 10^{-2}, c = 0.5, \gamma = 0 $: (1)SES-SP-1, (2) SES-SP-2.
 } 
	\label{fig12}
\end{figure} 
Consider a $4d$-dimensional nonseparable SHS, 
where $$H_0(X,Y) = \exp(f(X_1, Y_1)sin(g(X_2, Y_2))), \, H_1(X,Y) = cH_0(X,Y)$$ with $X=(X_1,X_2)^\top, Y=(Y_1,Y_2)^\top,$ $f(X_1, Y_1) = \frac{1}{10}(2X_1-3Y_1)$ and $g(X_2, Y_2) = \frac{1}{4}(X_2^2 + 2Y_2^2)$. 
It can be verified that  
$f(X, Y)$ and $g(X, Y)$ are linear invariant and quadratic invariant of SHS, respectively. 

Set the initial value $ X(0) = (-1,2)^\top, Y(0) = (1,-1)^\top.$
From Figs. \ref{fig11} and \ref{fig12}, it is obvious that the semi-implicit symplectic scheme $\widetilde F_{\Delta t}$ derived from \eqref{split1} and \eqref{strang} preserves both the linear and quadratic invariants, aligning with theoretical results. 
Table \ref{table3} indicates that the efficiency of the midpoint scheme is not as good as that of the semi-explicit symplectic scheme $\widetilde F_{\Delta t}$ derived from \eqref{split1} and \eqref{strang}
, while its accuracy is comparable to that of the semi-explicit symplectic scheme $\widetilde F_{\Delta t}$ based on \eqref{strang}. 

\begin{table}[htbp]  
\setlength{\abovecaptionskip}{0.1cm}
	\setlength{\belowcaptionskip}{-0.cm}
	\caption{Numerical errors and CPU time of numerical schemes for Example 3 with $c=0.5, \gamma = 1$.}
	\centering
	\begin{tabularx}{\textwidth}{p{0.9cm}<{\centering}|p{1.7cm}p{1.7cm}<{\centering}p{1.7cm}<{\centering}|p{1.5cm}<{\centering}p{1.5cm}<{\centering}p{1.5cm}<{\centering}}
		\toprule
		 & \multicolumn{1}{c}{}  & \multicolumn{2}{c|}{Errors} & \multicolumn{3}{c}{CPU time} \\
		\toprule
		$\Delta t$ & SES-SP-1 &SES-SP-2 & Midpoint &SES-SP-1 &SES-SP-2 & Midpoint  \\
		\midrule
		$1/2^5$     &  4.1817e-03  & 1.8663e-03 &1.8724e-03& 56.72s & 63.95s &  70.83s  \\
		$1/2^7$     &  9.3747e-04  & 4.5617e-04 &4.6078e-04& 185.27s & 221.14s& 254.81s  \\
		$1/2^9$     &  2.3144e-04  & 1.1361e-04 &1.1640e-04& 649.84s & 814.78s & 978.17s \\
		$1/2^{10}$  &  1.1685e-04  & 5.4829e-05 &5.5413e-05& 1188.46s & 1498.51s &1887.43s \\
		\bottomrule
	\end{tabularx}
	\label{table3}
\end{table}

\end{ex}

\begin{ex}
Take into account the nonseparable SHS as follows
\begin{equation}
\left\{
\begin{aligned}
& d\left[\begin{array}{l}
X \\
Y
\end{array}\right]=\left[\begin{array}{cc}
0 & -1 \\
1 & 0
\end{array}\right] \nabla H\left(X, Y\right)\left(d t + c \circ d W(t)\right),
\\
&X=y_2^0, \quad Y=\arctan \left(\frac{y_3^0}{y_1^0}\right),
\end{aligned}
\right.
 \end{equation}
where $ 
H\left(X, Y\right)=\frac{1}{2 I_1}\left(2 \mathcal{C}_1-X^2\right) \cos ^2\left(Y\right)+\frac{1}{2 I_2} X^2+\frac{1}{2 I_3}\left(2 \mathcal{C}_1-X^2\right) \sin ^2\left(Y\right)$ 
with 
$I_1 = \sqrt{2}+ \sqrt{\frac{2}{1.51}}, I_2 = \sqrt{2}-0.51\sqrt{ \frac{2}{1.51}}, I_3 =1,$ 
$c\in\mathbb R$ and $\mathcal{C}_1=\frac{1}{2}\left(\left(y_1^0\right)^2+\left(y_2^0\right)^2+\left(y_3^0\right)^2\right)$.  
Setting \begin{equation*}
y_1=\sqrt{2 \mathcal{C}_1-X^2} \cos \left(Y\right), \quad y_2=X, \quad y_3=\sqrt{2 \mathcal{C}_1-X^2} \sin \left(Y\right), 
 \end{equation*}
we obtain the stochastic rigid body system  
\begin{equation}\label{eq;rigid body}
d \mbf{y} = S(\mbf{y}) \nabla K(\mbf{y})\left(d t + c \circ d W(t)\right), \quad \mbf{y}(0)=\left(y_1^0, y_2^0, y_3^0\right)^{\top},
 \end{equation}
where $K=\frac{1}{2}\left(\frac{y_1^2}{I_1}+\frac{y_2^2}{I_2}+\frac{y_3^2}{I_3}\right)$ and $S =\left[\begin{array}{ccc}
0 & -y_3 & y_2 \\
y_3 & 0 & -y_1 \\
-y_2 & y_1 & 0
\end{array}\right].$ 
It possesses Casimir function 
$
C(\mbf{y})=\frac{1}{2}\left(y_1^2+y_2^2+y_3^2\right)=: \mathcal{C}_1.$

Let $y_1^0=\frac{1}{\sqrt{2}}, y_2^0=\frac{1}{\sqrt{2}}, y_3^0=0.$ 
Fig. \ref{fig4} shows that the proposed stochastic 
semi-explicit symplectic schemes and stochastic midpoint scheme are convergent with mean-square order 1. 
Compared with $\widetilde F_{\Delta t}$ derived from \eqref{split1} and the midpoint scheme, the stochastic semi-explicit scheme $\widetilde F_{\Delta t}$ based on \eqref{strang} is the most accurate, and its efficiency is almost as high as that of $\widetilde F_{\Delta t}$ with \eqref{split1}, which can be seen in Table \ref{table4}.  
Moreover, as seen in Fig. \ref{fig5}, in terms of preserving of the Casimir function, the proposed stochastic semi-explicit symplectic scheme is superior to the stochastic midpoint scheme.

\begin{figure}
	\centering
	\subfigure{
		\begin{minipage}{12.5cm}
\centering
\includegraphics[height=3.5cm,width=4cm]{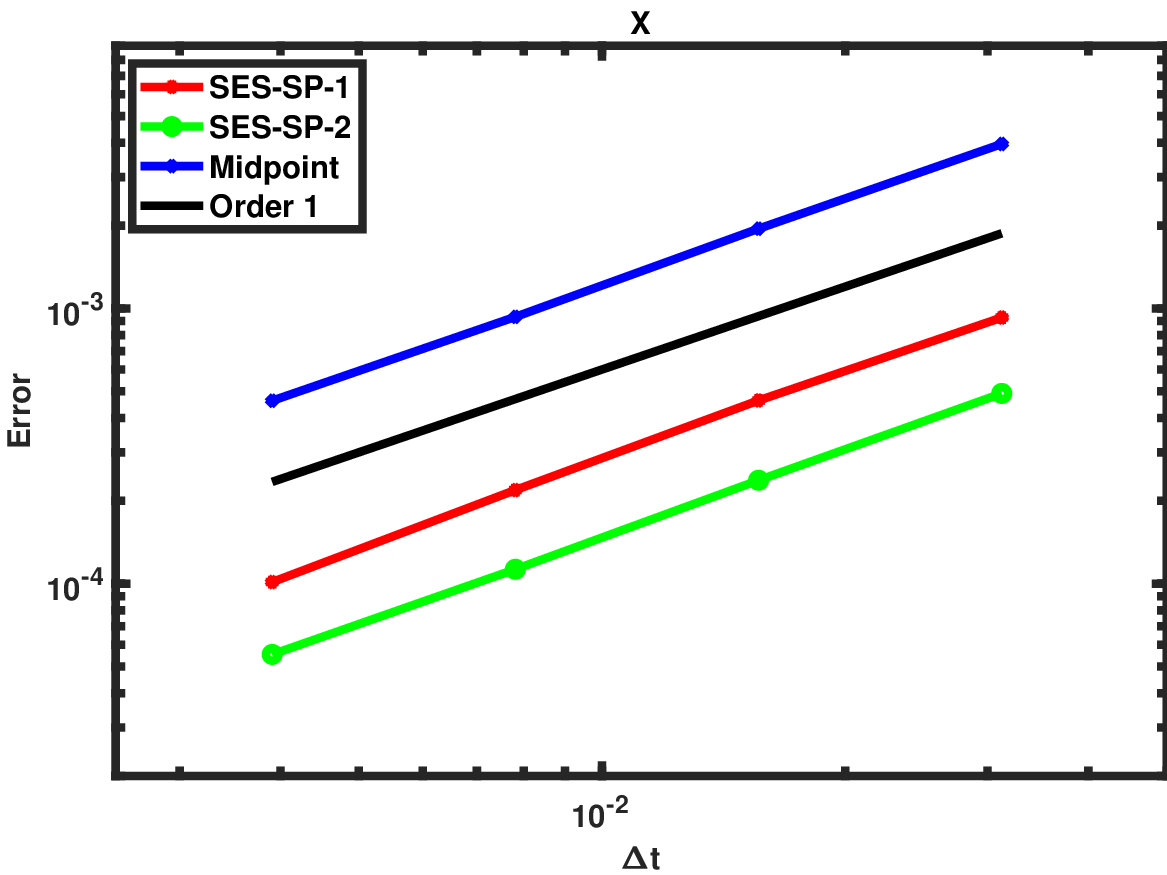}
\includegraphics[height=3.5cm,width=4cm]{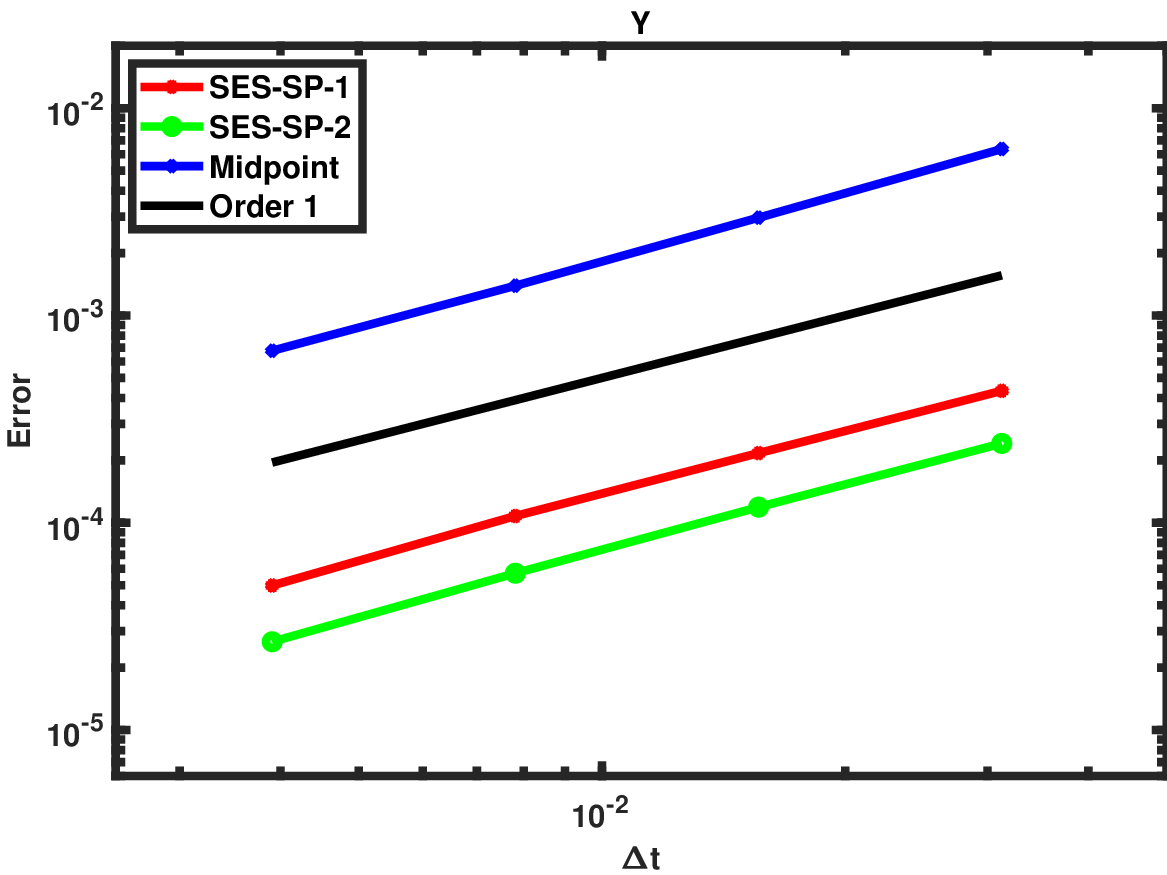}
\end{minipage}
	}
	\caption{Mean-square convergence order of various schemes in time with $T=1, c = 1, \gamma = 0.2$.} 
	\label{fig4}
\end{figure}

\begin{table}  
\setlength{\abovecaptionskip}{0.1cm}
	\setlength{\belowcaptionskip}{-0.cm}
	\caption{Numerical errors and CPU time of three numerical schemes for Example 4 with $c=0.1, \gamma = 0.5$.}
	\centering
	\begin{tabularx}{\textwidth}{p{0.9cm}<{\centering}|p{1.7cm}p{1.7cm}<{\centering}p{1.7cm}<{\centering}|p{1.5cm}<{\centering}p{1.5cm}<{\centering}p{1.5cm}<{\centering}}
		\toprule
		 & \multicolumn{1}{c}{}  & \multicolumn{2}{c|}{Errors} & \multicolumn{3}{c}{CPU time} \\
		\toprule
		$\Delta t$ & SES-SP-1 &SES-SP-2 & Midpoint &SES-SP-1 &SES-SP-2 & Midpoint  \\
		\midrule
		$1/2^8$    &  1.2459e-04  & 6.2235e-05 &7.2964e-04& 18.55s & 19.29s &  33.78s  \\
		$1/2^{10}$   &  2.9374e-05  & 1.4868e-05 &1.9000e-04& 80.50s & 81.48s& 132.71s  \\
		$1/2^{12}$ &  7.4610e-06  & 3.8570e-06 &4.7823e-05& 310.99s & 312.04s & 511.69s \\
		$1/2^{13}$ &  3.7182e-06  & 1.8666e-06 &2.2612e-05& 616.06s & 617.13s & 979.68s \\
		\bottomrule
	\end{tabularx}
	\label{table4}	
\end{table}

\begin{figure}
	\centering
	\subfigure{
		\begin{minipage}{12.5cm}
\centering
\includegraphics[height=3.5cm,width=4cm]{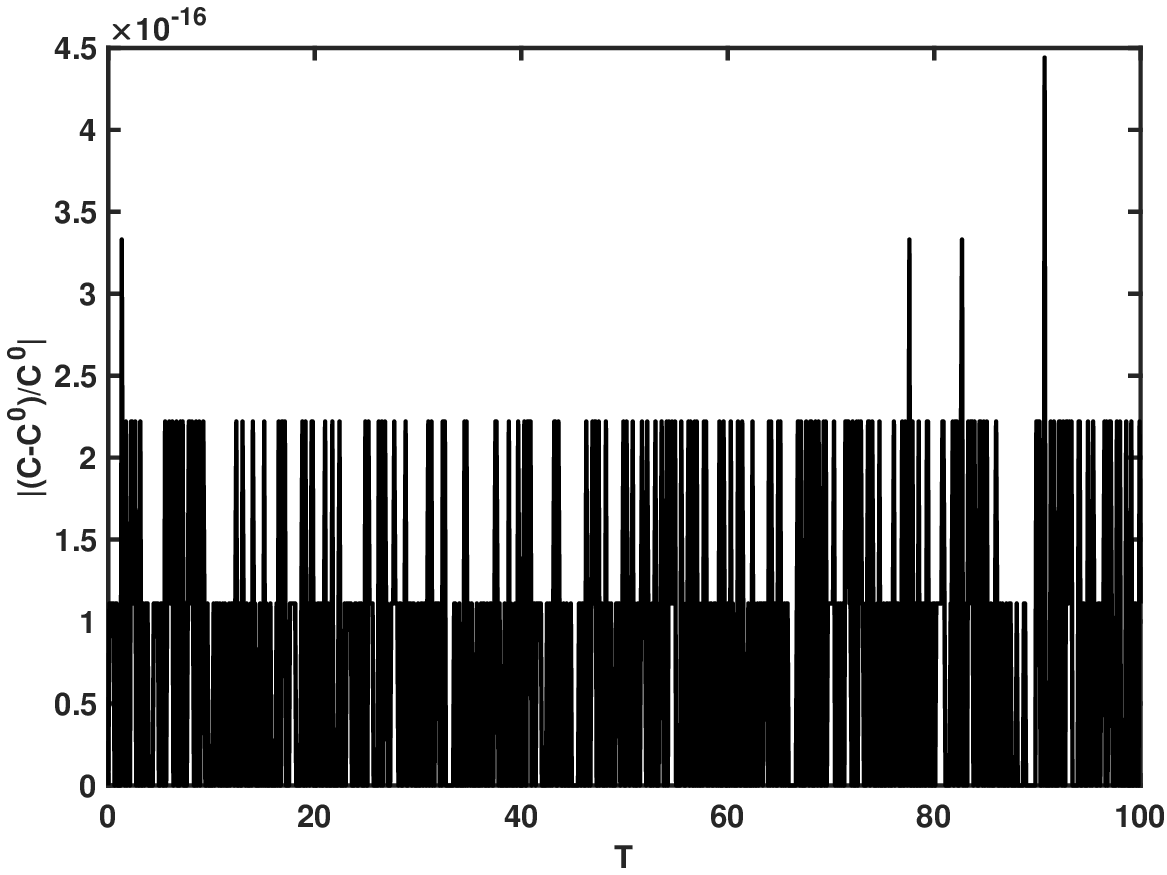}
\includegraphics[height=3.5cm,width=4cm]{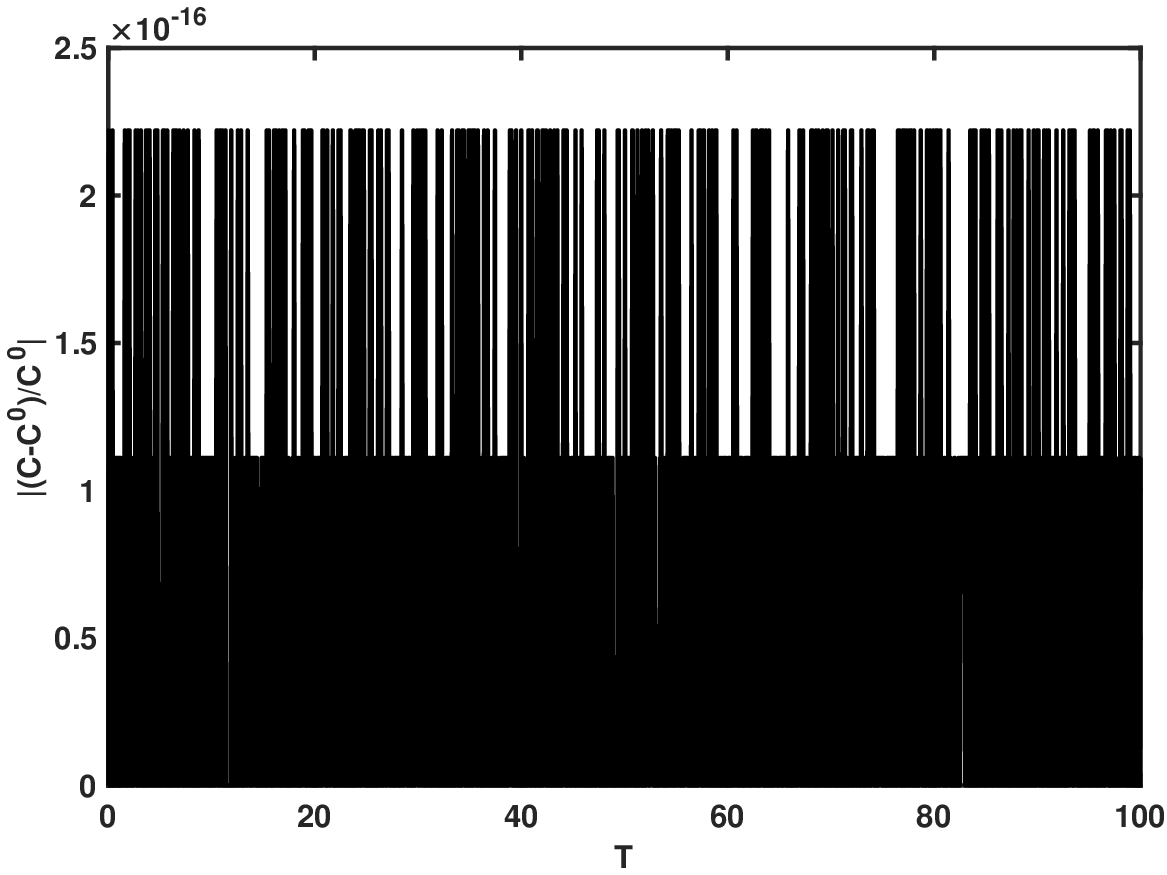}
\includegraphics[height=3.5cm,width=4cm]{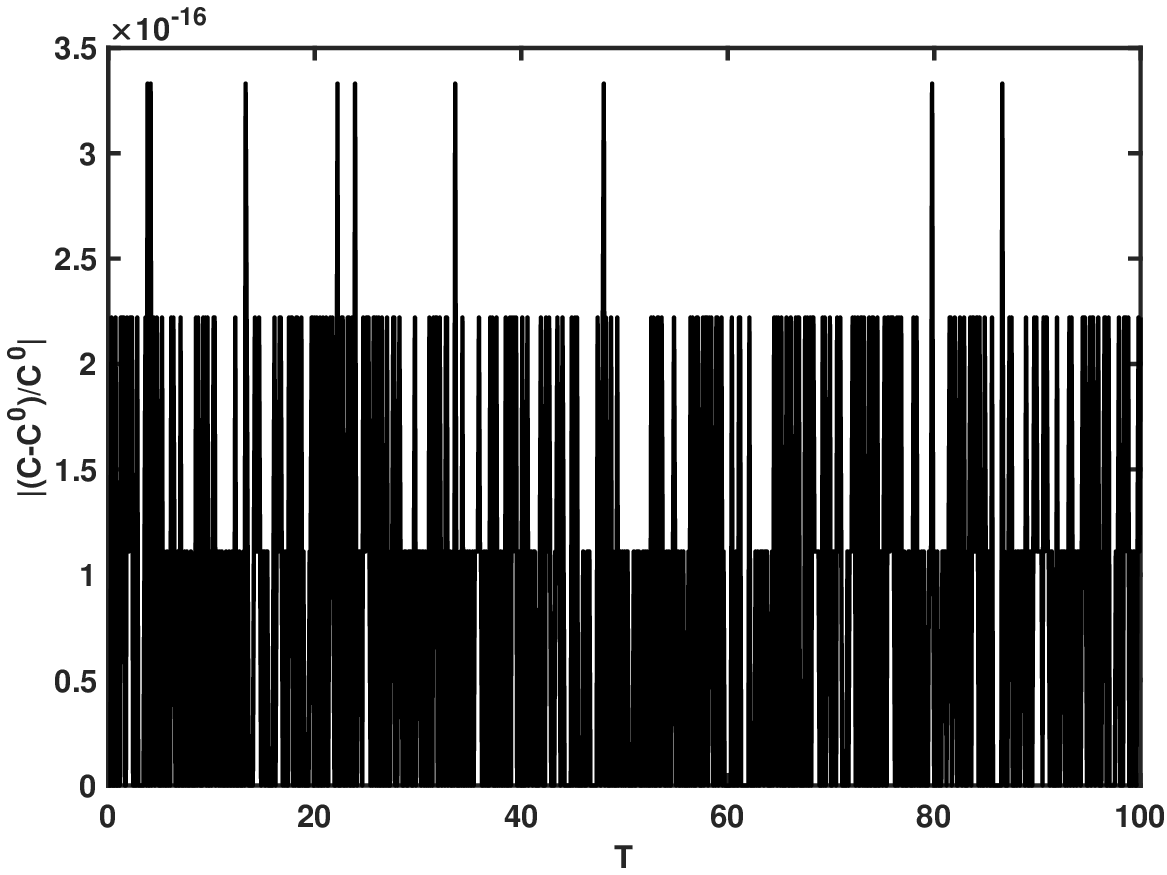}
\end{minipage}
	}
	\caption{Relative Casimir function with $T =100, \Delta t = 1\times 10^{-1}, c = 1, \gamma = 0.5 $: (1)SES-SP-1, (2) SES-SP-2, (3)Midpoint.} 
	\label{fig5}
\end{figure}
\end{ex}

\section{Stochastic semi-explicit multi-symplectic schemes for stochastic cubic Schr\"odinger equations with multiplicative noise}
The stochastic cubic Schr\"odinger equation is a fundamental SPDE in quantum mechanics, which describes the evolution of a quantum system subject to random fluctuations. 
This section is concerned with the stochastic cubic Schr\"odinger equation with multiplicative noise
\begin{equation}
\left\{
\label{eq;Schro}
\begin{aligned}
&\mathbf{i} du +  u_ {xx} dt + |u|^2 u dt = u\circ dW(t),\quad t\in(0,T),\\
& u(0)=u_0, 
\end{aligned}
\right.
\end{equation}
where $\mathbf{i}^2=-1,$ $u_{xx}$ denotes  the second derivative of $u$ on $\mathcal O=[0,L]$ with $\mathcal O$ being a bounded domain with homogeneous Dirichlet boundary condition. 
Here, $W(t)$ is a $\bf Q$-Wiener process, i.e., $W(t)=\sum\limits_{k\in \mathbb N^+}{\bf Q}^{\frac 12}e_k\beta_k(t)$ with $\{\beta_k\}_{k\in\mathbb N^+}$ being a sequence of independent Brownian motions on a probability space $(\Omega,$ 
$\mathcal F, (\mathcal F_t)_{t\ge 0},\mathbb P).$ 
Denoting $p$ (resp., $q$)  the real (resp., imaginary) part of the solution $u,$ it holds that 
\begin{equation*}
\left\{\begin{aligned}
&d q  = (p_{xx} +(p^2+q^2)p)dt  - p\circ dW(t), \quad\, q(0)=q_0,\\
&d p  = -(q_{xx} +(p^2+q^2)q)dt  + q\circ dW(t),\quad p(0)=p_0,
\end{aligned}
\right.
\end{equation*}
which preserves charge conservation law $$p(t)^2+q(t)^2=p_0^2+q_0^2,\quad a. s.,$$ and the infinite-dimensional stochastic symplectic structure
$
\int_{\mathcal O}\mathrm{d} p(t) \wedge \mathrm{d} q(t)dx=\int_{\mathcal O}\mathrm{d} p_0 \wedge \mathrm{d} q_0dx$ 
almost surely for any $t \in[0, T]$. Furthermore, introducing derivatives $\theta:=p_{x}$ and $\eta:=q_{x}$  
and defining a state variable $z=(p, q, \theta, \eta)^{\top}$, we reformulate the multi-symplectic form of \eqref{eq;Schro} as follows
\begin{equation}
\label{eq;Sch_multisym}
M d z+K z_x d t=\nabla S_1(z) d t+\nabla S_2(z) \circ d W(t)
\end{equation}
with
$$
\begin{aligned}
& M=\left[\begin{array}{cccc}
0 & -1 & 0 & 0 \\
1 & 0 & 0 & 0 \\
0 & 0 & 0 & 0 \\
0 & 0 & 0 & 0
\end{array}\right], \quad K=\left[\begin{array}{cccc}
0 & 0 & 1 & 0 \\
0 & 0 & 0 & 1 \\
-1 & 0 & 0 & 0 \\
0 & -1 & 0 & 0
\end{array}\right], \\
&
S_1(z)=-\frac{1}{4}\left(p^2+q^2\right)^2-\frac{1}{2}\left(\theta^2+\eta^2\right), \quad S_2(z)=\frac{1}{2}\left(p^2+q^2\right).
\end{aligned}
$$
It can be derived that \eqref{eq;Sch_multisym}
possesses the multi-symplectic conservation law
\begin{equation}
d[\mathrm{d} q \wedge \mathrm{d} p] +\partial_{x}[\mathrm{d} p \wedge \mathrm{d} \theta + \mathrm{d} q \wedge \mathrm{d} \eta]dt=0. \quad a. s.
\end{equation}

Define the discrete grid $\Omega_h := \{ x_i| 1 \leq i \leq L-1\}$ with $L\in \mathbb N^{+}\backslash \{1\},$  
and matrices $D^+$ and $D^-$ of order $(L-1)\times (L-1)$ as
\begin{align*}
D^+=\frac{1}{h}{
	\left[ \begin{array}{cccccc}
	-1&1&0&\cdots&0&0\\[0.05in]
	0&-1&1&\cdots&0&0\\
	\vdots&\vdots&\vdots&&\vdots&\vdots\\
	0&0&0&\cdots&-1&1\\[0.05in]
	0&0&0&\cdots&0&-1
	\end{array}
	\right ]},\quad
D^-=\frac{1}{h}{
	\left[ \begin{array}{cccccc}
	1&0&0&\cdots&0&0\\[0.05in]
	-1&1&0&\cdots&0&0\\
	\vdots&\vdots&\vdots&&\vdots&\vdots\\
	0&0&0&\cdots&1&0\\[0.05in]
	0&0&0&\cdots&-1&1
	\end{array}
	\right ]},
\end{align*}	
where $h=x_{i+1}-x_i,$ $i\in\{0,1,\ldots,L-1\}.$ 
Let $P(t):= \left(P_1(t),  \dots,P_{L-1}(t)\right)^\top,$ $Q(t):= \left(Q_1(t),\dots,Q_{L-1}(t)\right)^\top,$ 
where $P_i(t), Q_i(t)$ are approximations of $p(x_i,t),$ $q(x_i,t),$  respectively, for $1\leq i \leq L-1$ and $t\in[0,T].$ 
Denoting $\Theta:=D^- P=(\Theta_1,\ldots,\Theta_{L-1})^\top$ and $ B:=D^- Q=(B_1,\ldots,B_{L-1}),$ we have the spatial semi-discretization 
\begin{equation}\label{eq;semi_multisym}
\left\{\begin{aligned}
&d Q - D^+\Theta dt = \left(\text{diag}(P)^{2}+\text{diag}(Q)^{2}\right)Pdt  - \text{diag}(P)\mbf E\boldsymbol{\Lambda}\circ d\boldsymbol{\beta}(t),\\
&d P +  D^+B dt = -\left(\text{diag}(P)^{2}+\text{diag}(Q)^{2}\right)Qdt +\text{diag}(Q)\mbf E\boldsymbol{\Lambda}\circ d\boldsymbol{\beta}(t),\\
&D^- P = \Theta,\quad D^- Q = B,
\end{aligned}\right.
\end{equation}
where $\Theta_i(t),$ $ B_i(t)$ are approximations of $\theta(x_i,t)$, $ \eta(x_i,t)$, respectively, for $1\leq i \leq L-1$ and $t\in[0,T].$ 
Moreover,  
$\boldsymbol{\Lambda}= {\rm{diag} }\{ \sqrt{\lambda_1}, \sqrt{\lambda_2}, \dots,\sqrt{\lambda_M} \}$, $\boldsymbol {\beta}=\left(\beta_1, \beta_2, \dots,\beta_M\right)^\top,$ and 
\begin{equation*}
\mbf E={
	\left[ \begin{array}{cccc}
	e_1(x_1)&e_2(x_1)&\cdots&e_M(x_1)\\
	e_1(x_2)&e_2(x_2)&\cdots&e_M(x_2)\\
	\vdots&\vdots&&\vdots\\
	e_1(x_{L-1})&e_2(x_{L-1})&\cdots&e_M(x_{L-1})\\
	\end{array}
	\right ]_{(L-1) \times M}}.
\end{equation*}	
We note that \eqref{eq;semi_multisym} preserves the invariant  
$
I(P, Q):=\sum\limits_{i=1}^N\left(P_i^2+Q_i^2\right),
$
and 
\begin{align*}
&d(\mathrm{d}Q  \wedge \mathrm{d}P)+(\mathrm{d}Q  \wedge D^+ \mathrm{d}B
+\mathrm{d}P  \wedge D^+ \mathrm{d}\Theta +D^-\mathrm{d}Q  \wedge \mathrm{d}B
+D^-\mathrm{d}P  \wedge  \mathrm{d}\Theta )dt=0
\end{align*}
almost surely. 
In addition, by introducing $D^- Y = F, D^- X = G$,   
\eqref{eq;semi_multisym} can be rewritten as the following equivalent form
\begin{equation} \label{eq;extend_schrodinger_multisymplectic}
\left\{\begin{aligned}
	&d Q = D^+ \Theta dt + \left(\text{diag}(P)^{2}+\text{diag}(X)^{2}\right)Pdt  - \text{diag}(P)\mbf E\boldsymbol{\Lambda}\circ d\boldsymbol{\beta}(t),\\
	&d X = D^+ F dt +\left(\text{diag}(Y)^{2}+\text{diag}(Q)^{2}\right)Ydt - \text{diag}(Y)\mbf E\boldsymbol{\Lambda}\circ d\boldsymbol{\beta}(t),\\
	&d P = -D^+ B dt -\left(\text{diag}(Q)^{2}+\text{diag}(Y)^{2}\right)Qdt +\text{diag}v(Q)\mbf E\boldsymbol{\Lambda}\circ d\boldsymbol{\beta}(t),\\
	&d Y = -D^+ G dt - \left(\text{diag}(X)^{2}+\text{diag}(P)^{2}\right)Xdt+\text{diag}(X)\mbf E\boldsymbol{\Lambda}\circ d\boldsymbol{\beta}(t),\\
	&D^-P = \Theta,\quad  D^-Q = B,\quad D^-Y = F,\quad D^-X = G,
	\end{aligned}\right.
 \end{equation}
which satisfies the following conservation law
\begin{align*}
&d(\mathrm{d}Q  \wedge \mathrm{d}P + \mathrm{d}X \wedge \mathrm{d}Y)\\
&+(\mathrm{d}Q  \wedge D^+ \mathrm{d}B
+\mathrm{d}P  \wedge D^+ \mathrm{d}\Theta + \mathrm{d}X  \wedge D^+ \mathrm{d}G + \mathrm{d}Y  \wedge D^+ \mathrm{d}F)dt \\
&+(D^-\mathrm{d} Q  \wedge \mathrm{d}B
+D^-\mathrm{d} P  \wedge \mathrm{d}\Theta + D^-\mathrm{d} X  \wedge \mathrm{d}G + D^-\mathrm{d} Y  \wedge  \mathrm{d}F)dt =0, \quad a.s.
\end{align*} 
Under the same initial value, \eqref{eq;extend_schrodinger_multisymplectic} presents two copies of \eqref{eq;semi_multisym}, i.e., $(Q(t),P(t))=(X(t),Y(t)).$ 
Based on the separable feature, \eqref{eq;extend_schrodinger_multisymplectic} can be split into two subsystems on $[t_n, t_{n+1}],$
\begin{equation}
\label{subsystem1}
 \left\{\begin{aligned}
	&d X = D^+ F dt +\left(\diag(Y)^{2}+\diag(Q)^{2}\right)Ydt - \diag(Y)\mbf E\boldsymbol{\Lambda}\circ d\boldsymbol{\beta}(t),\\
	&d P = -D^+ B dt -\left(\diag(Q)^{2}+\diag(Y)^{2}\right)Qdt + \diag(Q)\mbf E\boldsymbol{\Lambda}\circ d\boldsymbol{\beta}(t),\\
	&d Q = 0,\quad d Y = 0, \quad D^-P = \Theta,\quad  D^-Q = B,\quad D^-Y = F,\quad D^-X = G,
\end{aligned}\right.
 \end{equation}
 and
 \begin{equation}
 \label{subsystem2}
\left\{\begin{aligned}
	&d Q = D^+ \Theta dt + \left(\diag(P)^{2}+\diag(X)^{2}\right)Pdt  - \diag(P)\mbf E\boldsymbol{\Lambda}\circ d\boldsymbol{\beta}(t),\\
	&d Y = -D^+ G dt - 
\left(\diag(X)^{2}+\diag(P)^{2}\right)Xdt+\diag(X)\mbf E\boldsymbol{\Lambda}\circ d\boldsymbol{\beta}(t),\\
	&d X = 0,\quad d P = -0,\quad D^-P = \Theta,\quad  D^-Q = B,\quad D^-Y = F,\quad D^-X = G.
\end{aligned}\right.
\end{equation}
whose exact solutions can be explicitly obtained.  
Denote $\mathcal{F}_{\Delta t}^a$ and $\mathcal{F}_{\Delta t }^b$ by the stochastic flow of \eqref{subsystem1} and \eqref{subsystem2}, respectively, and $\widetilde{\zeta}^n=((\widetilde{Q}^n)^\top, (\widetilde{X}^n)^\top,(\widetilde{P}^n)^\top,(\widetilde{Y}^n)^\top,(\widetilde{\Theta}^n)^\top,
(\widetilde{B}^n)^\top, (\widetilde{F}^n)^\top, 
(\widetilde{G}^n)^\top
)^\top$ for $n\in\{0,1,\ldots,N\}.$ 
Then we have a full discretization based on  integrator $\mathcal{F}_{\Delta t} =\mathcal{F}_{\Delta t/2}^a \star \mathcal{F}_{\Delta t }^b \star \mathcal{F}_{\Delta t/2}^a$, i.e.,
\begin{equation}
\label{eq;schrodinger_an}
\left\{\begin{aligned}
&Q_a= \widetilde{Q}^n,\quad Y_a= \widetilde{Y}^n,\quad B_a=\widetilde{B}^n=
D^-\widetilde{Q}^n, \quad F_a=\widetilde{F}^n=D^-\widetilde{Y}^n,\\
&P_a = \widetilde{P}^n -  \frac{\Delta t}{2}(D^+B_a +
(\diag(\widetilde{Q}^n)^2+ \diag(\widetilde{Y}^n)^2)\widetilde{Q}^n)
+\diag(\widetilde{Q}^n)\mathbf E\mathbf\Lambda\Delta\boldsymbol{\beta}_n^1,\\
&X_a = \widetilde{X}^n +  \frac{\Delta t}{2}(D^+F_a +
(\diag(\widetilde{Q}^n)^2+ \diag(\widetilde{Y}^n)^2)\widetilde{Y}^n)
-\diag(\widetilde{Y}^n)\mathbf E\mathbf\Lambda\Delta\boldsymbol{\beta}_n^1,\\
&\Theta_a=D^-P_a, \quad G_a=D^-X_a.
\end{aligned}\right.
\end{equation}
\vspace{-0.5em}
\begin{equation}
\label{eq;schrodinger_bn}
\left\{\begin{aligned}
&P_b = P_a,\quad X_b = X_a,\quad \Theta_b=D^-P_b, \quad G_b=D^-X_b, \\
&Q_b=Q_a+\Delta t(D^+\Theta_b+
(\diag(P_a)^2+ \diag(X_a)^2)P_a)
-\diag(P_a)\mathbf E\mathbf\Lambda\Delta\boldsymbol{\beta}_n,\\
&Y_b= Y_a -\Delta t(D^+G_b+
(\diag(P_a)^2+ \diag(X_a)^2)X_a)
+\diag(X_a)\mathbf E\mathbf\Lambda\Delta\boldsymbol{\beta}_n,\\
&B_b=D^-Q_b, \quad F_b=D^-Y_b,
\end{aligned}\right.
\end{equation}
\vspace{-0.5em}
\begin{equation}
\label{eq;schrodinger_a(n+1)}
\left\{\begin{aligned}
&\widetilde{Q}^{n+1}= Q_b,\quad \widetilde{Y}^{n+1}= Y_b, \quad \widetilde{B}^{n+1}=D^-\widetilde{Q}^{n+1}, \quad \widetilde{F}^{n+1}=D^-\widetilde{Y}^{n+1}, \\
&\widetilde{P}^{n+1} = P_b -  
\frac{\Delta t}{2}(D^+\widetilde{B}^{n+1}
+(\diag(\widetilde{Q}^{n+1})^2+ \diag(\widetilde{Y}^{n+1})^2)\widetilde{Q}^{n+1})\\
&\quad\quad\quad
+\diag(\widetilde{Q}^{n+1})\mathbf E\mathbf\Lambda\Delta\boldsymbol{\beta}_n^2,\\
&\widetilde{X}^{n+1} = X_b +  \frac{\Delta t}{2}
(D^+\widetilde{F}^{n+1} +
(\diag(\widetilde{Q}^{n+1})^2+ \diag(\widetilde{Y}^{n+1})^2)\widetilde{Y}^{n+1})\\
&\quad\quad\quad
-\diag(\widetilde{Y}^{n+1})\mathbf E\mathbf\Lambda\Delta\boldsymbol{\beta}_n^2,\\
&\widetilde{\Theta}^{n+1}=D^-\widetilde P^{n+1}, \quad \widetilde{G}^{n+1}=D^-\widetilde{X}^{n+1},
\end{aligned}\right.
\end{equation}
where $ \Delta\boldsymbol{\beta}_n=\boldsymbol{\beta}(t_{n+1})-\boldsymbol{\beta}(t_{n}),$ $\Delta\boldsymbol{\beta}_n^2=
\boldsymbol{\beta}(t_{n+1})-\boldsymbol{\beta}(t_n+\frac {\Delta t}2),$ and $\Delta\boldsymbol{\beta}_n^1=
\boldsymbol{\beta}(t_n+\frac {\Delta t}2)-\boldsymbol{\beta}(t_{n})$ for $n\in\{0,1,\ldots,N-1\}.$  
It can be derived that the full discretization preserves the discrete stochastic multi-symplectic conservation law, that is, for $n\in\{0,1,\ldots,N-1\},$ 
\begin{align*}
&\mathrm{d}\widetilde{Q}^{n+1} \wedge \mathrm{d}\widetilde{P}^{n+1}+ \mathrm{d}\widetilde{X}^{n+1} \wedge \mathrm{d}\widetilde{Y}^{n+1}+\Delta t(\mathrm{d}\widetilde{Q}^{n+1} \wedge D^+ \mathrm{d}\widetilde{B}^{n+1}\\
&+
\mathrm{d}\widetilde{P}^{n+1}  \wedge D^+ \mathrm{d}\widetilde{\Theta}^{n+1}+\mathrm{d}\widetilde{X}^{n+1}  \wedge D^+ \mathrm{d}\widetilde{G}^{n+1} + \mathrm{d}\widetilde{Y}^{n+1}  \wedge D^+ \mathrm{d}\widetilde{F}^{n+1})\\
&+\Delta t(D^-\mathrm{d}\widetilde{Q}^{n+1} \wedge  \mathrm{d}\widetilde{B}^{n+1}+
D^-\mathrm{d}\widetilde{P}^{n+1}  \wedge  \mathrm{d}\widetilde{\Theta}^{n+1})\\
&+\Delta t(D^-\mathrm{d}\widetilde{X}^{n+1}  \wedge D^- \mathrm{d}\widetilde{G}^{n+1} +  D^-\mathrm{d}\widetilde{Y}^{n+1}  \wedge \mathrm{d}\widetilde{F}^{n+1})\\
=&\mathrm{d}\widetilde{Q}^{n} \wedge \mathrm{d}\widetilde{P}^{n}+ \mathrm{d}\widetilde{X}^{n} \wedge \mathrm{d}\widetilde{Y}^{n}+\Delta t(\mathrm{d}\widetilde{Q}^{n} \wedge D^+ \mathrm{d}\widetilde{B}^{n}\\
&+
\mathrm{d}\widetilde{P}^{n}  \wedge D^+ \mathrm{d}\widetilde{\Theta}^{n}+ \mathrm{d}\widetilde{X}^{n}  \wedge D^+ \mathrm{d}\widetilde{G}^{n} + \mathrm{d}\widetilde{Y}^{n}  \wedge D^+ \mathrm{d}\widetilde{F}^{n})\\
&+\Delta t(D^-\mathrm{d}\widetilde{Q}^{n} \wedge  \mathrm{d}\widetilde{B}^{n}+
D^-\mathrm{d}\widetilde{P}^{n}  \wedge  \mathrm{d}\widetilde{\Theta}^{n})\\
&+\Delta t(D^-\mathrm{d}\widetilde{X}^{n}  \wedge D^- \mathrm{d}\widetilde{G}^{n} +  D^-\mathrm{d}\widetilde{Y}^{n}  \wedge \mathrm{d}\widetilde{F}^{n}),\quad a.s.
\end{align*} 
\vspace{-1.5em}
\begin{rem}
We would like to mention that $\widetilde B^n,$ $\widetilde F^n,$ $\widetilde \Theta^n$ and $\widetilde G^n$ are determined by $\widetilde Q^n,$ $\widetilde Y^n,$ $\widetilde P^n$ and $\widetilde X^n$, respectively. As a result, $\mathcal F_{\Delta t}$ can be seen as a mapping from $(\widetilde Q^n,\widetilde Y^n,\widetilde P^n,\widetilde X^n)$ to $(\widetilde Q^{n+1},\widetilde Y^{n+1},\widetilde P^{n+1},\widetilde X^{n+1}),$ where $n\in\{0,1,\ldots,N-1\}.$ 
Moreover, the fully discrete scheme based on the composition $\mathcal{F}_{\Delta t} =\mathcal{F}_{\Delta t}^a \star \mathcal{F}_{\Delta t }^b$ or $\mathcal{F}_{\Delta t}^b \star \mathcal{F}_{\Delta t }^a$ or $\mathcal{F}_{\Delta t/2}^b \star \mathcal{F}_{\Delta t }^a \star \mathcal{F}_{\Delta t/2}^b$ also preserves the discrete stochastic multi-symplectic conservation law. 
\end{rem}
\begin{rem}
Set augmented Hamiltonians 
\begin{align*}
&\tilde H_1(Q,X,P,Y)=\sum\limits_{i}^{L-1}(P_i^2+X_i^2)^2+
\sum\limits_{i}^{L-1}(Y_i^2+Q_i^2)^2+\gamma_1\|P-Y\|^2+\gamma_1\|Q-X\|^2,\\
&\tilde H_2(Q,X,P,Y)=-\sum\limits_{i}^{L-1}(P_i^2+X_i^2+Y_i^2+Q_i^2)+\gamma_2\|P-Y\|^2+\gamma_2\|Q-X\|^2,
\end{align*}
where $\gamma_1,\gamma_2\in\mathbb R.$ Then 
\eqref{eq;semi_multisym} possesses another equivalent form
\begin{align*}
	&d Q = D^+ \Theta dt+ (\diag(P)^{2}+\diag(X)^{2})Pdt +2\gamma_1(P-Y)dt\\
&\quad\quad\quad - \diag(P+2\gamma_2(P-Y))\mbf E\boldsymbol{\Lambda}\circ d\boldsymbol{\beta}(t),\\
	&d X = D^+ F dt +\left(\diag(Y)^{2}+\diag(Q)^{2}\right)Ydt-2\gamma_1(P-Y)dt \\
 &\quad\quad\quad - \diag(Y-2\gamma_2(P-Y))\mbf E\boldsymbol{\Lambda}\circ d\boldsymbol{\beta}(t),\\
	&d P = -D^+ B dt -\left(\diag(Q)^{2}+\diag(Y)^{2}\right)Qdt-2\gamma_1(Q-X)dt\\
 &\quad\quad\quad + \diag(Q-2\gamma_2(Q-X))\mbf E\boldsymbol{\Lambda}\circ d\boldsymbol{\beta}(t),\\
	&d Y = -D^+ G dt - \left(\diag(X)^{2}+\diag(P)^{2}\right)Xdt+2\gamma_1(Q-X)dt\\
&\quad\quad\quad+\diag(X+2\gamma_2(Q-X))\mbf E\boldsymbol{\Lambda}\circ d\boldsymbol{\beta}(t),\\
	&D^-P = \Theta,\quad  D^-Q = B,\quad D^-Y = F,\quad D^-X = G,
	\end{align*}
which can be split into three explicitly solvable subsystems \eqref{subsystem1},  \eqref{subsystem2} and 
\begin{equation}
\label{subsystem3}
\left\{\begin{aligned}
	&d Q =2\gamma_1(P-Y)dt - \diag(2\gamma_2(P-Y))\mbf E\boldsymbol{\Lambda}\circ d\boldsymbol{\beta}(t),\\
	&d X = -2\gamma_1(P-Y)dt +\diag(2\gamma_2(P-Y))\mbf E\boldsymbol{\Lambda}\circ d\boldsymbol{\beta}(t),\\
	&d P =-2\gamma_1(Q-X)dt- \diag(2\gamma_2(Q-X))\mbf E\boldsymbol{\Lambda}\circ d\boldsymbol{\beta}(t),\\
	&d Y =2\gamma_1(Q-X)dt+\diag(2\gamma_2(Q-X))\mbf E\boldsymbol{\Lambda}\circ d\boldsymbol{\beta}(t),\\
	&D^-P = \Theta,\quad  D^-Q = B,\quad D^-Y = F,\quad D^-X = G.
	\end{aligned}\right.
 \end{equation}
Denoting $F^c_{\Delta t}$  by the stochastic flow of \eqref{subsystem3},  we can obtain the stochastic multi-symplectic scheme based on $
\mathcal{F}_{\Delta t}: =\mathcal{F}_{\Delta t }^a \star \mathcal{F}_{\Delta t }^b\star \mathcal{F}_{\Delta t }^c$ or $\mathcal{F}_{\Delta t / 2}^a \star \mathcal{F}_{\Delta t/2}^b\star \mathcal{F}_{\Delta t }^c
\star \mathcal{F}_{\Delta t/2}^b\star \mathcal{F}_{\Delta t / 2}^c.$ 
\end{rem}

However, the fully disreted scheme proposed above is multi-symplectic in the extended phase space but not in the original phase space. 
To this issue, we apply the same arguments as in Scheme 2.1 and let 
$(\widetilde{Q}^n,\widetilde{P}^n,\widetilde{B}^n,\widetilde{\Theta}^n)=
(\widetilde{X}^n,\widetilde{Y}^n,\widetilde{G}^n,\widetilde{F}^n)$ for each $n\in\{0,1,\ldots, N\}.$ 
Then, we define a mapping $\widetilde{\mathcal{F}}_{\Delta t}: (Q^n,P^n,B^n,\Theta^n)\rightarrow  (Q^{n+1},P^{n+1},B^{n+1},\Theta^{n+1})$ in the original phase space, which eliminates the problematic defect $(\widetilde{Q}^n-\widetilde{X}^n, \widetilde{P}^n-\widetilde{Y}^n)$.  

\noindent$\bf\hrulefill$

\noindent {\textbf {Scheme 3.1}} Semi-explicit multi-symplectic scheme for stochastic cubic Schr\"odinger equations with multiplicative noise

\vspace{0.5em}
\noindent Given $\xi^n =\left((Q^n)^\top,(P^n)^\top,(B^n)^\top,(\Theta^n)^\top\right)^\top$ with $B^{n}=D^-Q^{n}$ and $\Theta^{n}=D^-P^{n},$ for $n\in\{0,1,\ldots, N-1\},$ and an extended phase space symplectic integrator $\mathcal{F}_{\Delta t}$,   
\begin{itemize}
\item[1.] Set $Z^n:=\left((Q^n)^\top, (Q^n)^\top, (P^n)^\top, (P^n)^\top\right)^\top\in \mathcal N:=\ker(A)$ and $\widetilde{Z}^n:=Z^n+A^\top \lambda,$ where 
\begin{equation*}
A=\left[\begin{array}{cccc}
I_{L-1} & -I_{L-1} & 0 &0\\
0 & 0 & I_{L-1} & -I_{L-1} 
\end{array}\right],
\end{equation*}
and $\widetilde{Z}^n= \left((\widetilde{Q}^{n})^\top, (\widetilde{X}^{n})^\top, (\widetilde{P}^{n})^\top, (\widetilde{Y}^{n})^\top\right)^\top.$ 
\item[2.]  Find $\lambda \in \mathbb{R}^{2(L-1)}$ such that ${Z}^{n+1}:=\mathcal{F}_{\Delta t}(\widetilde{Z}^{n})+A^\top \lambda \in \mathcal{N}.$
\item [3.]Let $\widetilde{Z}^{n+1}=\mathcal F_{\Delta t}(\widetilde{Z}^{n})= \widetilde{Z}^n= \left((\widetilde{Q}^{n+1})^\top, (\widetilde{X}^{n+1})^\top, (\widetilde{P}^{n+1})^\top, (\widetilde{Y}^{n+1})^\top\right)^\top 
.$ \item [4.] Let $Z^{n+1}=\left(
(Q^{n+1})^\top, (Q^{n+1})^\top, (P^{n+1})^\top, (P^{n+1})^\top
\right)^\top:=\widetilde{Z}^{n+1}+A^\top \lambda.$ 
\item [5.] $\xi^{n+1}:=\left((Q^{n+1})^\top, (P^{n+1})^\top,(B^{n+1})^\top,(\Theta^{n+1})^\top\right)^\top$, where $B^{n+1}=D^-Q^{n+1}$ and $\Theta^{n+1}=D^-P^{n+1}.$ 
\end{itemize}

\noindent$\bf\hrulefill$

\begin{thm}
The full discretization 
$\widetilde{\mathcal{F}}_{\Delta t}: (Q^n,P^n,B^n,\Theta^n)\rightarrow  (Q^{n+1},P^{n+1},B^{n+1},\Theta^{n+1})$ preserves the stochastic multi-symplectic conservation law, i.e., 
\begin{align*}
&\mathrm{d}{Q}^{n+1} \wedge \mathrm{d}{P}^{n+1} +\Delta t(\mathrm{d}{Q}^{n+1} \wedge D^+ \mathrm{d}{B}^{n+1}\\
&+
\mathrm{d}{P}^{n+1}  \wedge D^+ \mathrm{d}{\Theta}^{n+1}+D^-\mathrm{d}{Q}^{n+1} \wedge  \mathrm{d}{B}^{n+1}+
D^-\mathrm{d}{P}^{n+1}  \wedge  \mathrm{d}{\Theta}^{n+1})\\
=&\mathrm{d}{Q}^{n} \wedge \mathrm{d}{P}^{n} +\Delta t(\mathrm{d}{Q}^{n} \wedge D^+ \mathrm{d}{B}^{n}\\
&+
\mathrm{d}{P}^{n}  \wedge D^+ \mathrm{d}{\Theta}^{n}+D^-\mathrm{d}{Q}^{n} \wedge  \mathrm{d}{B}^{n}+
D^-\mathrm{d}{P}^{n}  \wedge  \mathrm{d}{\Theta}^{n}),\quad a.s.
\end{align*} 
for $n\in\{0,1,\ldots,N-1\}.$ 
\end{thm}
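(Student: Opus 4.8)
The plan is to reproduce, at the level of the discrete multi-symplectic two-form, the mechanism used in Theorem~\ref{tm:finite symplectic}. The first observation I would record is that, by the remark above (the auxiliary fields satisfy $\widetilde{B}=D^-\widetilde{Q}$, $\widetilde{\Theta}=D^-\widetilde{P}$, $\widetilde{G}=D^-\widetilde{X}$, $\widetilde{F}=D^-\widetilde{Y}$), the extended conservation law displayed immediately above the theorem is nothing but the sum of two copies of a single functional
\begin{equation*}
\omega[Q,P]:=\mathrm{d}Q\wedge\mathrm{d}P+\Delta t\big(\mathrm{d}Q\wedge D^+\mathrm{d}B+\mathrm{d}P\wedge D^+\mathrm{d}\Theta+D^-\mathrm{d}Q\wedge\mathrm{d}B+D^-\mathrm{d}P\wedge\mathrm{d}\Theta\big),
\end{equation*}
namely $\omega[\widetilde{Q},\widetilde{P}]$ for the first copy and $\omega[\widetilde{X},\widetilde{Y}]$ for the second, with the understanding $B=D^-Q$, $\Theta=D^-P$. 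The target identity is then precisely $\omega[Q^{n+1},P^{n+1}]=\omega[Q^{n},P^{n}]$, since Step~5 of Scheme~3.1 fixes $B^{n+1}=D^-Q^{n+1}$ and $\Theta^{n+1}=D^-P^{n+1}$.

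Next I would decode the symmetric projection. Writing $\lambda=(\lambda_1^\top,\lambda_2^\top)^\top$ and $A^\top\lambda=(\lambda_1^\top,-\lambda_1^\top,\lambda_2^\top,-\lambda_2^\top)^\top$, Step~1 gives $\widetilde{Q}^n=Q^n+\lambda_1$, $\widetilde{X}^n=Q^n-\lambda_1$, $\widetilde{P}^n=P^n+\lambda_2$, $\widetilde{Y}^n=P^n-\lambda_2$, whereas $\widetilde{Z}^{n+1}=Z^{n+1}-A^\top\lambda$ from Step~4 gives $\widetilde{Q}^{n+1}=Q^{n+1}-\lambda_1$, $\widetilde{X}^{n+1}=Q^{n+1}+\lambda_1$, $\widetilde{P}^{n+1}=P^{n+1}-\lambda_2$, $\widetilde{Y}^{n+1}=P^{n+1}+\lambda_2$, with one and the same $\lambda$. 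In particular $Q^{n+1}=\tfrac{1}{2}(\widetilde{Q}^{n+1}+\widetilde{X}^{n+1})$ and $P^{n+1}=\tfrac{1}{2}(\widetilde{P}^{n+1}+\widetilde{Y}^{n+1})$, and analogously at level $n$.

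The computation then hinges on the fact that $\omega[\cdot,\cdot]$ is a homogeneous quadratic functional of its argument, each summand being a wedge of two one-forms obtained linearly, through $D^\pm$, from $(Q,P)$. Applying the parallelogram identity with $u=(Q^{n+1},P^{n+1})$ and $v=(\lambda_1,\lambda_2)$ yields
\begin{equation*}
\omega[\widetilde{Q}^{n+1},\widetilde{P}^{n+1}]+\omega[\widetilde{X}^{n+1},\widetilde{Y}^{n+1}]=2\,\omega[Q^{n+1},P^{n+1}]+2\,\omega[\lambda_1,\lambda_2],
\end{equation*}
where $\omega[\lambda_1,\lambda_2]$ denotes the same functional evaluated at $(\lambda_1,\lambda_2)$ together with their $D^-$-images, and the mixed terms cancel because the two copies carry opposite signs of $v$. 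An identical computation at level $n$ produces the very same defect contribution $2\,\omega[\lambda_1,\lambda_2]$. Substituting both expressions into the extended conservation law (which equates the two left-hand sides) makes the $\omega[\lambda_1,\lambda_2]$ terms cancel, leaving $\omega[Q^{n+1},P^{n+1}]=\omega[Q^{n},P^{n}]$, which is the assertion.

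The hard part is bookkeeping rather than conceptual. One must verify that, after inserting the constraints $\widetilde{B}=D^-\widetilde{Q}$ and so on, the $\Delta t$-weighted spatial-difference terms of the two copies genuinely reassemble into the single functional $\omega[\cdot,\cdot]$, and that their mixed (cross-copy) parts are symmetric in $v$ so that the parallelogram cancellation survives the presence of $D^+$ and $D^-$; this is the multi-symplectic counterpart of the difference identity \eqref{eq;3-3}, and it goes through because $D^+$ and $D^-$ are fixed linear maps commuting with the wedge-bilinearity. The single point requiring genuine attention is that the same multiplier $\lambda$ occurs in both Step~1 and Step~4 — the defining property of the symmetric projection — which is exactly what forces the two defect forms $\omega[\lambda_1,\lambda_2]$ at levels $n$ and $n+1$ to coincide and hence to drop out.
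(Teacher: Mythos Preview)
Your proposal is correct but takes a different route from the paper. You carry the full functional $\omega[Q,P]$ --- including the $\Delta t$-weighted spatial-difference terms --- through a parallelogram argument that mirrors the proof of Theorem~\ref{tm:finite symplectic}, using the extended multi-symplectic conservation law stated just above the theorem as input. The paper instead observes that these spatial terms are identically zero: since $B^n=D^-Q^n$, $\Theta^n=D^-P^n$, and $(D^-)^\top=-D^+$ makes $\widetilde{A}:=D^+D^-$ symmetric, one has $\mathrm{d}Q^n\wedge D^+\mathrm{d}B^n=\mathrm{d}Q^n\wedge\widetilde{A}\,\mathrm{d}Q^n=0$ and $D^-\mathrm{d}Q^n\wedge\mathrm{d}B^n=D^-\mathrm{d}Q^n\wedge D^-\mathrm{d}Q^n=0$, so the whole $\Delta t$-block vanishes at every time level. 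The claim thus collapses to $\mathrm{d}Q^{n+1}\wedge\mathrm{d}P^{n+1}=\mathrm{d}Q^n\wedge\mathrm{d}P^n$; the paper then checks directly that each substep of the Strang splitting preserves $\mathrm{d}\widetilde{Q}\wedge\mathrm{d}\widetilde{P}+\mathrm{d}\widetilde{X}\wedge\mathrm{d}\widetilde{Y}$ and invokes Theorem~\ref{tm:finite symplectic} verbatim. Your route is more general in spirit --- it would survive even if the spatial terms did not vanish separately --- whereas the paper's shortcut makes the proof essentially a one-line reduction to Theorem~\ref{tm:finite symplectic}.
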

\begin{proof} 

Since $(D^-)^\top = -D^+,$ $\widetilde A:=D^+D^-$ is symmetric, and
\begin{align*}
&P_a = \widetilde{P}^n -  \frac{\Delta t}{2}(\widetilde A\widetilde Q^n +
(\diag(\widetilde{Q}^n)^2+ \diag(\widetilde{Y}^n)^2)\widetilde{Q}^n)
+\diag(\widetilde{Q}^n)\mathbf E\mathbf\Lambda\Delta\boldsymbol{\beta}_n^1,\\
&X_a=\widetilde{X}^n +  \frac{\Delta t}{2}(\tilde A\widetilde Y^n+
(\diag(\widetilde{Q}^n)^2+ \diag(\widetilde{Y}^n)^2)\widetilde{Y}^n)
-\diag(\widetilde{Y}^n)\mathbf E\mathbf\Lambda\Delta\boldsymbol{\beta}_n^1,
\end{align*}
where $n\in\{0,1,\ldots,N-1\},$ we obtain 
$$\mathrm{d}Q_a  \wedge \mathrm{d}P_a + \mathrm{d}X_a  \wedge \mathrm{d}Y_a=\mathrm{d}\widetilde{Q}^{n} \wedge \mathrm{d}\widetilde{P}^{n}+ \mathrm{d}\widetilde{X}^{n} \wedge \mathrm{d}\widetilde{Y}^{n}.$$ 

Repeating similar arguments as above, 
\begin{align*}
&\mathrm{d}\widetilde{Q}^{n+1}  \wedge \mathrm{d}\widetilde{P}^{n+1}  + \mathrm{d}\widetilde{X}^{n+1}   \wedge \mathrm{d}\widetilde{Y}^{n+1}\\
=&
\mathrm{d}Q_b  \wedge \mathrm{d}P_b + \mathrm{d}X_b  \wedge \mathrm{d}Y_b
=\mathrm{d}Q_a  \wedge \mathrm{d}P_a + \mathrm{d}X_a  \wedge \mathrm{d}Y_a.
\end{align*}
As a result, $\left(\widetilde{Q}^{n}, \widetilde{X}^{n}, \widetilde{P}^{n}, \widetilde{Y}^{n}\right)\to \left(\widetilde{Q}^{n+1}, \widetilde{X}^{n+1}, \widetilde{P}^{n+1}, \widetilde{Y}^{n+1}\right)$ is a symplectic transformation. 
Taking advantage of Theorem \ref{tm:finite symplectic}, we deduce 
$$\mathrm{d}Q^{n+1}  \wedge \mathrm{d}P^{n+1} =\mathrm{d}Q^{n}  \wedge \mathrm{d}P^{n}.$$
Since $B^{n}=D^-Q^{n}$ and $\Theta^{n}=D^-P^{n},$ we derive
\begin{align*}
\mathrm{d}{Q}^{n} \wedge D^+ \mathrm{d}{B}^{n}+
\mathrm{d}{P}^{n}  \wedge D^+ \mathrm{d}{\Theta}^{n}+D^-\mathrm{d}{Q}^{n} \wedge  \mathrm{d}{B}^{n}+
D^-\mathrm{d}{P}^{n}  \wedge  \mathrm{d}{\Theta}^{n}=0,
\end{align*}
which implies the result. 
\end{proof}

For the stochastic nonlinear Schr\"odinger equation, we let
$x\in [-5,5],$ $P(0) = cos(2x)sech(x),$ $Q(0)= sin(2x)sech(x)$. 
The basis $\left\{e_{k}\right\}_{k\in \mathbb{N}^+}$ and eigenvalue $\left\{q_{k}\right\}_{k\in \mathbb{N}+}$ of $\bf Q$ are chosen as
$e_{k}(x)=\frac{1}{\sqrt{5}} \sin (k \pi x)$ and $q_{k}=\frac{1}{k^{6}},$ respectively.  
From Fig. \ref{fig14} it can be found that the full discretization based on the semi-explicit symplectic scheme 
with $\mathcal{F}_{\Delta t} =\mathcal{F}_{\Delta t}^a \star \mathcal{F}_{\Delta t }^b$ or $\mathcal{F}_{\Delta t} =\mathcal{F}_{\Delta t/2}^a \star \mathcal{F}_{\Delta t }^b \star \mathcal{F}_{\Delta t/2}^a$ 
can preserve the discrete charge conservation law, which is the same as the stochastic midpoint scheme. 
\begin{figure}[h!]
	\centering
	\subfigure{
		\begin{minipage}{12.5cm}
\centering
\includegraphics[height=3.5cm,width=4cm]{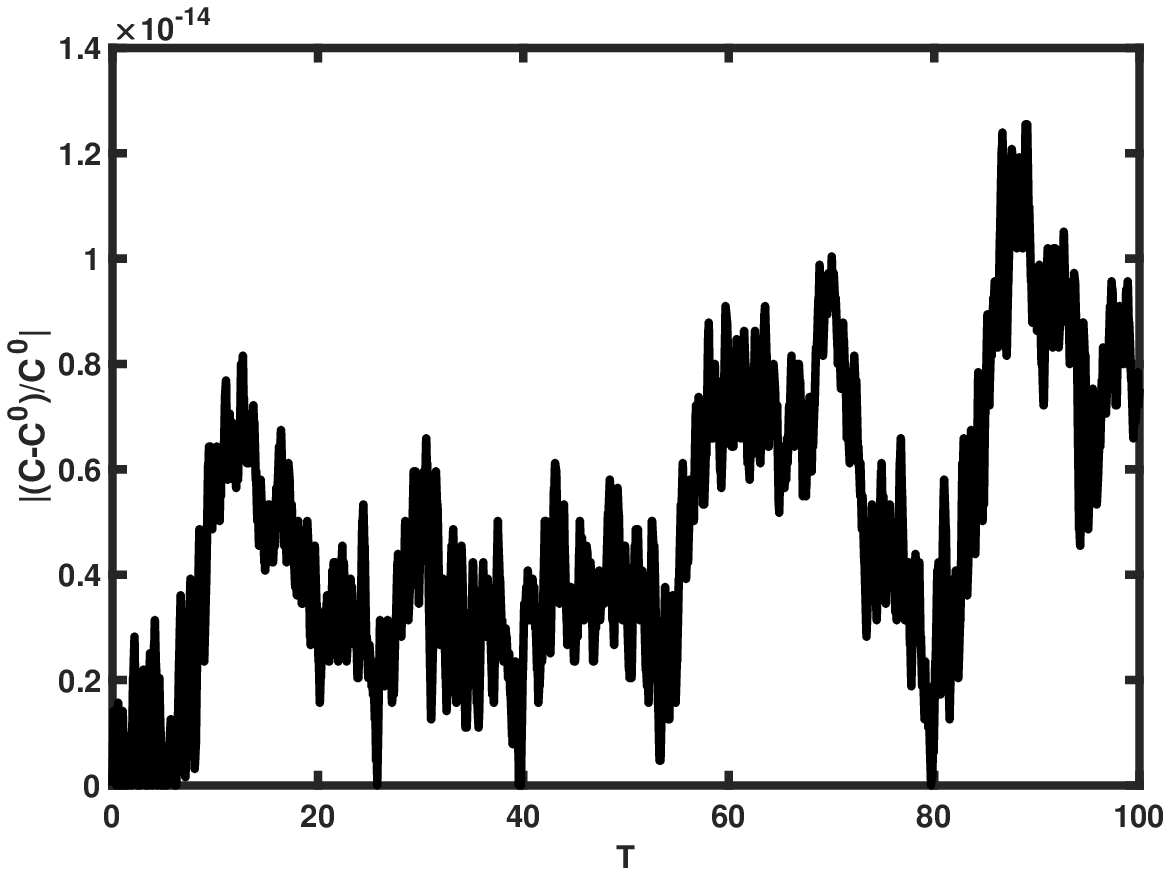}
\includegraphics[height=3.5cm,width=4cm]{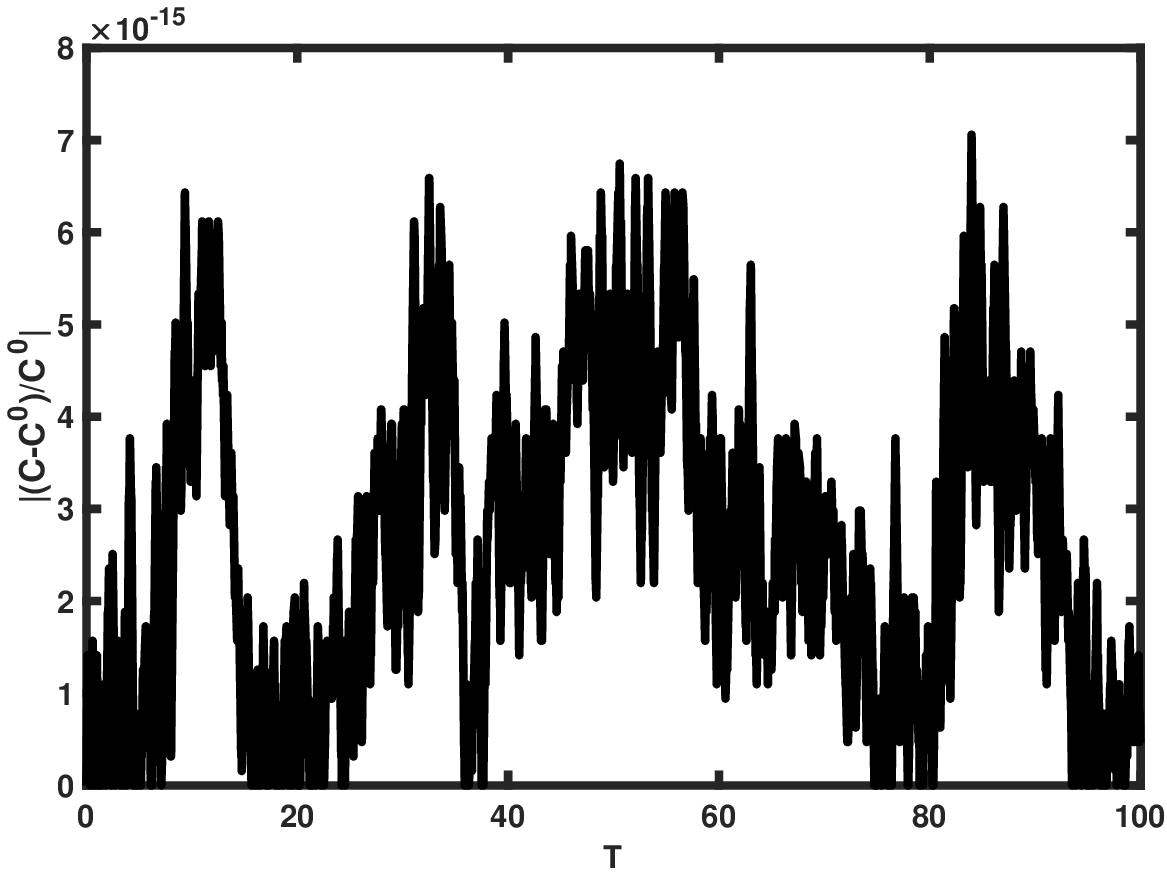}
\includegraphics[height=3.5cm,width=4cm]{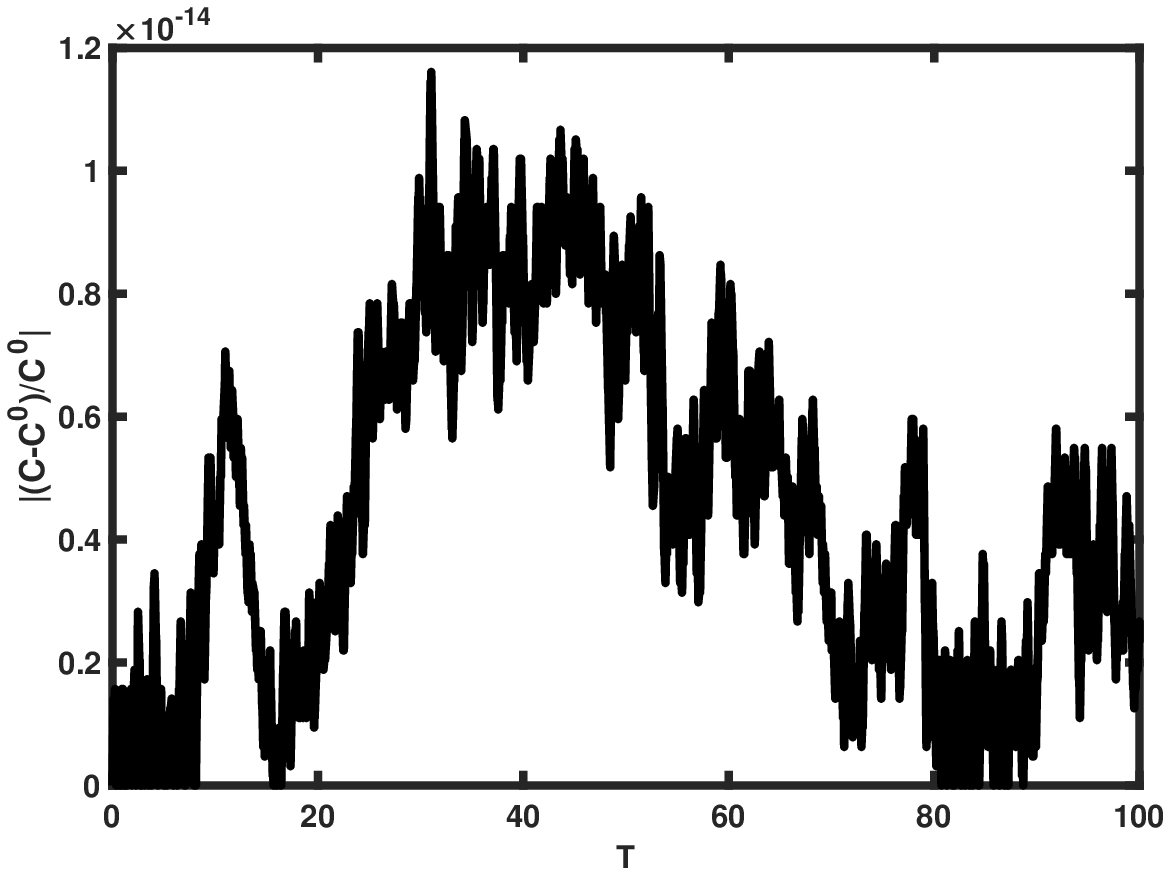}
\end{minipage}
	}
	\caption{Relative Charge with $T =100, \Delta t = 1\times 10^{-3}, h=1, \gamma = 0 $: (1)SES-SP-1, (2) SES-SP-2, (3)Midpoint.} 
	\label{fig14}
\end{figure}

Below we turn to proving the discrete charge conservation law of the proposed full discretization. 

\begin{thm}
The full discretization   
$\widetilde{\mathcal{F}}_{\Delta t}: (Q^n,P^n,B^n,\Theta^n)\rightarrow  (Q^{n+1},P^{n+1},B^{n+1},\Theta^{n+1})$ with $\mathcal{F}_{\Delta t} =\mathcal{F}_{\Delta t}^a \star \mathcal{F}_{\Delta t }^b$ or $\mathcal{F}_{\Delta t} =\mathcal{F}_{\Delta t/2}^a \star \mathcal{F}_{\Delta t }^b \star \mathcal{F}_{\Delta t/2}^a$ preserves the discrete charge conservation law, i.e., 
\begin{align*}
(P^{n+1})^\top P^{n+1}+(Q^{n+1})^\top Q^{n+1}=(P^{n})^\top P^{n}+(Q^{n})^\top Q^{n},\quad a.s.
\end{align*}
for $n\in\{0,1,\ldots,N-1\}.$ 
\end{thm}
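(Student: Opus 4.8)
The plan is to recognize the discrete charge as a quadratic invariant and then reuse the symmetric-projection argument already developed in the preceding theorem on preservation of quadratic invariants. Writing $Z=((Q)^\top,(P)^\top)^\top$, the charge $\|P\|^2+\|Q\|^2$ is exactly the quadratic form $Q_\kappa(Z)=\tfrac12 Z^\top\kappa Z$ with $\kappa=2I_{2(L-1)}$, i.e. $\kappa_{11}=\kappa_{22}=2I_{L-1}$ and $\kappa_{12}=0$, and it is a quadratic invariant of the semi-discrete system \eqref{eq;semi_multisym}. Under the identifications $X\!\to\!Q$, $U\!\to\!X$, $Y\!\to\!P$, $V\!\to\!Y$, the associated extended quadratic form on $\widetilde Z=((\widetilde Q)^\top,(\widetilde X)^\top,(\widetilde P)^\top,(\widetilde Y)^\top)^\top$ collapses, since $\kappa_{12}=0$, to $\hat Q_\kappa(\widetilde Z)=\widetilde Q^\top\widetilde X+\widetilde P^\top\widetilde Y$. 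Because the auxiliary variables $B,\Theta,F,G$ are slaved to $Q,P,Y,X$ through $D^-$, it suffices to monitor $\hat Q_\kappa$ on the reduced state $(\widetilde Q,\widetilde X,\widetilde P,\widetilde Y)$.

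The key step is to show that the extended flow $\mathcal F_{\Delta t}$ preserves $\hat Q_\kappa$. Since a composition of invariance-preserving maps is again invariance-preserving and $\mathcal F_{\Delta t}$ is built only from $\mathcal F^a_{\Delta t}$ and $\mathcal F^b_{\Delta t}$ (the case $\gamma=0$), it is enough to verify this for each subflow. For $\mathcal F^a_{\Delta t}$, where $Q$ and $Y$ are frozen while $X,P$ evolve according to \eqref{subsystem1}, I would compute $\mathrm d(\widetilde Q^\top\widetilde X+\widetilde P^\top\widetilde Y)=Q^\top\,\mathrm dX+Y^\top\,\mathrm dP$. The linear terms give $Q^\top D^+F-Y^\top D^+B=Q^\top\widetilde A Y-Y^\top\widetilde A Q$, which vanishes because $\widetilde A=D^+D^-$ is symmetric; the cubic drift terms vanish because they have the form $Q^\top M Y-Y^\top M Q=0$ with the symmetric diagonal matrix $M=\diag(Y)^2+\diag(Q)^2$; and the stochastic terms cancel because $Q^\top\diag(Y)=Y^\top\diag(Q)$, both being the row with entries $Q_iY_i$. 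An identical computation, with the roles of $(Q,Y)$ and $(X,P)$ interchanged and $\Theta=D^-P$, $G=D^-X$, handles $\mathcal F^b_{\Delta t}$ via \eqref{subsystem2}.

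With the extended invariance in hand, I would close exactly as in the quadratic-invariant theorem. Writing $Z^{n+1}=\widetilde Z^{n+1}+A^\top\lambda$ and $Z^n=\widetilde Z^n-A^\top\lambda$ with $\lambda=-\tfrac12 A\widetilde Z^{n+1}=\tfrac12 A\widetilde Z^n$, and using $AA^\top=2I_{2(L-1)}$ together with the same projection algebra built on $I_{4(L-1)}-A^\top A$, the increment telescopes to
\[
Q_\kappa(Z^{n+1})-Q_\kappa(Z^{n})=\hat Q_\kappa(\widetilde Z^{n+1})-\hat Q_\kappa(\widetilde Z^{n}).
\]
Since $\widetilde Z^{n+1}=\mathcal F_{\Delta t}(\widetilde Z^n)$ and $\mathcal F_{\Delta t}$ preserves $\hat Q_\kappa$, the right-hand side vanishes, which is precisely the claimed discrete charge conservation law.

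The main obstacle is the bookkeeping in the second step: one must confirm that the antisymmetric cancellations survive through the intermediate stages $Q_a,P_a,X_a,Y_a$ and the half-step noise increments $\Delta\boldsymbol{\beta}_n^1,\Delta\boldsymbol{\beta}_n^2$ appearing in \eqref{eq;schrodinger_an}--\eqref{eq;schrodinger_a(n+1)}, and not merely for a single Euler-type increment. Because every subflow is explicit and exact and the cancellations rest solely on the symmetry of $\widetilde A$ and of the diagonal matrices $\diag(\cdot)$, this reduces to routine grouping of terms once organized; no cross-term obstruction arises here precisely because $\kappa_{12}=0$.
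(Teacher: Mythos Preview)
Your proposal is correct and follows essentially the same route as the paper: reduce the projected charge increment to the increment of the extended bilinear form $\hat Q_\kappa(\widetilde Z)=\widetilde Q^\top\widetilde X+\widetilde P^\top\widetilde Y$ via the symmetric-projection algebra, and then use that $\mathcal F_{\Delta t}$ preserves $\hat Q_\kappa$. Your version is in fact slightly more complete, since you spell out the subflow cancellations (symmetry of $\widetilde A=D^+D^-$, of the diagonal cubic term, and the identity $Q^\top\diag(Y)=Y^\top\diag(Q)$) that justify the extended invariance, whereas the paper simply asserts $(\widetilde Q^{n+1})^\top\widetilde X^{n+1}+(\widetilde P^{n+1})^\top\widetilde Y^{n+1}=(\widetilde Q^{n})^\top\widetilde X^{n}+(\widetilde P^{n})^\top\widetilde Y^{n}$.
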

\begin{proof}
From Scheme 3.1 it can be found that 
\begin{align*}
&\left(\widetilde Q^{n}, \widetilde X^{n}, \widetilde P^{n}, \widetilde Y^{n}\right)
=\left(Q^{n}+\lambda_1, Q^{n}-\lambda_1, P^{n}+\lambda_2, P^{n}-\lambda_2\right),\\
&\left(Q^{n+1}, Q^{n+1}, P^{n+1}, P^{n+1}\right)=\left(\widetilde{Q}^{n+1}+\lambda_1, \widetilde{X}^{n+1}-\lambda_1, \widetilde{P}^{n+1}+\lambda_2, \widetilde{Y}^{n+1}-\lambda_2\right).
\end{align*}
and 
\begin{equation}
\widetilde{X}^{n+1} - \widetilde{Q}^{n+1} = 2\lambda_1= \widetilde{Q}^{n} - \widetilde{X}^{n},
~~\widetilde{Y}^{n+1} - \widetilde{P}^{n+1} = 2\lambda_2= \widetilde{P}^{n} - \widetilde{Y}^{n}.
\end{equation}
As a consequence, 
\begin{align*}
&(Q^{n+1})^\top Q^{n+1}+(P^{n+1})^\top P^{n+1}\\
=& (\widetilde{Q}^{n+1}+\lambda_1)^\top(\widetilde{X}^{n+1}-\lambda_1)+(\widetilde{P}^{n+1}+\lambda_2)^\top(\widetilde{Y}^{n+1}-\lambda_2)\\
=&(\widetilde{Q}^{n+1})^\top\widetilde{X}^{n+1}+(\widetilde{P}^{n+1})^\top\widetilde{Y}^{n+1}+\lambda_1^2+\lambda_2^2,
\end{align*}
and 
\begin{align*}
(Q^{n})^\top Q^{n}+(P^{n})^\top P^{n}
=& (\widetilde{Q}^{n}-\lambda_1)^\top(\widetilde{X}^{n}+\lambda_1)+(\widetilde{P}^{n}-\lambda_2)^\top(\widetilde{Y}^{n}+\lambda_2)\\
=&(\widetilde{Q}^{n})^\top\widetilde{X}^{n}+(\widetilde{P}^{n})^\top\widetilde{Y}^{n}+\lambda_1^2+\lambda_2^2.
\end{align*}
According to $(\widetilde{Q}^{n+1})^\top\widetilde{X}^{n+1}+(\widetilde{P}^{n+1})^\top\widetilde{Y}^{n+1}=(\widetilde{Q}^{n})^\top\widetilde{X}^{n}+(\widetilde{P}^{n})^\top\widetilde{Y}^{n},$ we obtain the result.
\end{proof}

\section*{Acknowledgements}
This work is supported by National Natural Science Foundation of China (No. 12101596, No. 12031020,  No. 12171047, No. 12301518).


\end{document}